\numberwithin{equation}{section}
\theoremstyle{plain}
  \newtheorem{teor}{Theorem}[section]
  \newtheorem{prop}[teor]{Proposition}
  \newtheorem{lemma}[teor]{Lemma}
  \newtheorem{cor}[teor]{Corollary}
\theoremstyle{definition}
\theoremstyle{remark}
  \newtheorem{oss}[teor]{Remark}
\newcommand{\gra}[1]{\left\{#1\right\}}
\newcommand{\pa}[1]{\left(#1\right)}
\newcommand{\qua}[1]{\left[#1\right]}
\newcommand{\abs}[1]{\left\lvert#1\right\rvert}
\newcommand{\R}{\mathbb{R}}
\newcommand{\N}{\mathbb{N}}
\newcommand{\Q}{\mathbb{Q}}
\def\rn{\mathbb{R}^n}
\newcommand{\bx}{\bar{x}}
\renewcommand{\leq}{\leqslant} 
\renewcommand{\geq}{\geqslant}
\renewcommand{\phi}{\varphi}
\renewcommand{\epsilon}{\varepsilon}
\DeclareMathOperator{\supp}{Supp}
\title[Stability for BBL inequalities]{Stability for Borell-Brascamp-Lieb inequalities}
\begin{document}

\author{Andrea Rossi and Paolo Salani}
\address{Andrea Rossi, DiMaI "U. Dini" - Universit\`a di Firenze, andrea.rossi@unifi.it}
\address{Paolo Salani, DiMaI "U. Dini" - Universit\`a di Firenze, paolo.salani@unifi.it}
\begin{abstract}
We study stability issues for the so-called Borell-Brascamp-Lieb inequalities, proving that when near equality is realized, the involved functions must be $L^1$-close to be $p$-concave and to coincide up to homotheties of their graphs. 
\end{abstract}
\maketitle

\section{Introduction}
The aim of this paper is to study the stability of the so-called {\em Borell-Brascamp-Lieb inequality} (BBL inequality below), which we recall hereafter. 
\begin{prop}[BBL inequality] \label{bbl}
Let $0<\lambda <1, -\frac{1}{n} \leq p \leq +\infty$, $0\leq f,g,h\in L^1(\rn)$ and  assume the following holds
\begin{equation}
h((1-\lambda) x + \lambda y) \geq \mathcal{M}_p(f(x), g(y); \lambda)
\end{equation}
for every $x,\,y\in\rn$. Then
\begin{equation}\label{eq0}
\int_{\mathbb{R}^n} \! h \, dx\geq \mathcal{M}_{\frac{p}{np+1}} \left (\int_{\rn} f\,dx, \int_{\rn} g\,dx\,; \lambda \right).
\end{equation}
\end{prop}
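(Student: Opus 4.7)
My plan is to prove the inequality by induction on the dimension $n$, using one-coordinate slicing to reduce the $n$-dimensional case to the case $n=1$, which I would treat by a transport/change-of-variables argument combined with a H\"older-type inequality for $p$-means.

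For the base case $n=1$, set $a := \int_{\R} f$ and $b := \int_{\R} g$; I may assume $a,b>0$, since otherwise the right-hand side of \eqref{eq0} reduces to $0$ (using $-1/n\le p\le +\infty$ and the conventions on $\mathcal M_{p/(p+1)}$). I would then introduce the normalized cumulative distributions $F(x) = a^{-1}\int_{-\infty}^x f$ and $G(y) = b^{-1}\int_{-\infty}^y g$, which are non-decreasing with image $(0,1)$, and the transport map $z(s):=(1-\lambda)F^{-1}(s)+\lambda G^{-1}(s)$, $s\in(0,1)$. A direct computation yields $z'(s)=(1-\lambda)a/f(F^{-1}(s))+\lambda b/g(G^{-1}(s))$ a.e. Performing the change of variables $x=z(s)$ (monotonicity of $z$) and applying the pointwise hypothesis on $h$ gives
\[
\int_{\R} h\,dx \;\ge\; \int_0^1 h(z(s))\,z'(s)\,ds \;\ge\; \int_0^1 \mathcal M_p\bigl(f(F^{-1}(s)),g(G^{-1}(s));\lambda\bigr)\,z'(s)\,ds.
\]
The base case then reduces to the pointwise algebraic inequality
\[
\mathcal M_p(\alpha,\beta;\lambda)\,\bigl[(1-\lambda)c/\alpha + \lambda d/\beta\bigr] \;\ge\; \mathcal M_{p/(p+1)}(c,d;\lambda), \qquad \alpha,\beta,c,d>0,
\]
specialized to $c=a$, $d=b$ (which are constant in $s$): integrating over $s\in(0,1)$ produces exactly $\mathcal M_{p/(p+1)}(a,b;\lambda)$ on the right-hand side. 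This algebraic inequality is a H\"older-type estimate for $p$-means with conjugate exponent $1$, since $(p/(p+1))^{-1}=p^{-1}+1$.

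For the inductive step, write $\R^n=\R^{n-1}\times\R$ and, for $s,r\in\R$ with $\tau=(1-\lambda)s+\lambda r$, note that the hypothesis yields, for every $x',y'\in\R^{n-1}$,
\[
h\bigl((1-\lambda)x'+\lambda y',\tau\bigr) \;\ge\; \mathcal M_p\bigl(f(x',s),g(y',r);\lambda\bigr).
\]
The inductive hypothesis in dimension $n-1$, applied to the slices, gives
\[
H(\tau) \;\ge\; \mathcal M_q\bigl(F(s),G(r);\lambda\bigr), \qquad q := \frac{p}{(n-1)p+1},
\]
where $F(s):=\int_{\R^{n-1}}f(\cdot,s)$, $G(r):=\int_{\R^{n-1}}g(\cdot,r)$, $H(\tau):=\int_{\R^{n-1}}h(\cdot,\tau)$. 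The condition $p\ge -1/n$ ensures $q\ge -1$, so the one-dimensional case just established applies to $F,G,H$ with exponent $q$; combining it with Fubini and the identity $q/(q+1)=p/(np+1)$ closes the induction.

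The main obstacle is the algebraic inequality at the heart of the one-dimensional reduction, which has to be verified across the whole admissible range $p\in[-1,+\infty]$; the boundary values $p\in\{-1,0,\pm\infty\}$ demand separate treatment, as the target exponent $p/(p+1)$ passes through $-\infty$, $0$, and $1$. Some extra care is also needed in defining the transport map $z$ on the (possibly positive-measure) sets where $f$ or $g$ vanishes, and in justifying the change of variables there. Once these points are addressed, the inductive reduction is essentially bookkeeping.
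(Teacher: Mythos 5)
Your proof is correct in outline, but it follows a genuinely different route from the paper. You prove BBL by the classical Henstock--Macbeath/Brascamp--Lieb scheme: reduce to dimension one by a tensorization/Fubini induction, and in dimension one use a monotone transport map $z(s)=(1-\lambda)F^{-1}(s)+\lambda G^{-1}(s)$ plus the H\"older inequality for power means, $\mathcal{M}_p(\alpha,\beta;\lambda)\,\mathcal{M}_1(a/\alpha,b/\beta;\lambda)\geq \mathcal{M}_{p/(p+1)}(a,b;\lambda)$ (valid since $p\geq -1$, see \cite{HLP}, Theorem 10). The exponent bookkeeping is right: in the inductive step $q=p/((n-1)p+1)$ satisfies $q\geq -1$ precisely when $p\geq -1/n$, and $q/(q+1)=p/(np+1)$. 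The paper instead follows Klartag's reduction: with $p=1/s$ one associates to $f$ the body $K_{f,s}\subset\R^{n+s}$ of \eqref{solido}, observes that the $(p,\lambda)$-supremal convolution of $f,g$ corresponds to the Minkowski combination $(1-\lambda)K_f+\lambda K_g$ (see \eqref{utile}), and invokes the Brunn--Minkowski inequality in $\R^{n+s}$ together with the volume identity \eqref{volK}. The two proofs have complementary strengths: your transport argument covers the full stated range $-1/n\leq p\leq +\infty$ in one stroke and is more elementary, whereas the paper's geometric route only handles $p>0$ (via rational $s$ and approximation) but produces exactly the convex-geometric objects to which the Figalli--Jerison stability theorem (Proposition~\ref{FJ}) can be applied, which is what the rest of the paper requires. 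Your own flagged concerns (behavior of $F^{-1}$, $G^{-1}$ where $f$ or $g$ vanish, absolute continuity of $z$, the limiting cases $p\in\{-1,0,\pm\infty\}$) are real but standard, and do not undermine the argument.
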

Here the  number $p/(np + 1)$ has to be interpreted in the obvious way in the extremal cases (i.e. it is
equal to $-\infty$ when $p =- 1/n$ and to $1/n$ when $p = +\infty$) and the quantity $\mathcal{M}_q(a,b;\lambda)$ represents the ($\lambda$-weighted) {\em $q$-mean} of two nonnegative numbers $a$ and $b$,  
that is $\mathcal{M}_q(a, b; \lambda)=0$ if $ab=0$ for every $q\in\R\cup\{\pm\infty\}$ and
\begin{equation}\label{pmean}
\mathcal{M}_q(a, b; \lambda) =\left\{
 \begin{array}{ll}
 \max\{a,b\}  \, \,\, \, &\,\,q=+\infty\,, \\
 \left[ (1-\lambda)a^q + \lambda b^q\right]^{\frac{1}{q}} \, \, &\,\,0\neq q\in\R\,, \\
a^{1-\lambda} b^\lambda \,\, \, \, &\,\,q=0\,,\\
  \min\{a,b\} \, \, \, \, &\,\,q=-\infty\,,
                  \end{array}
\right.\,\quad\text{if }ab>0\,.
\end{equation}


The BBL inequality was first proved (in a slightly different form) for $p > 0$  by Henstock and Macbeath (with $n=1$)
in \cite{HM} and by Dinghas in \cite{DI}. Then it was generalized by Brascamp and Lieb in \cite{BL2} and by Borell in \cite{BO}. The case $p = 0$ is usually known as {\em Pr\'{e}kopa-Leindler inequality}, as it was previously proved by Pr\'{e}kopa \cite{Prekopa} and Leindler \cite{Leindler} (later rediscovered by Brascamp and Lieb in \cite{BL1}). 

In this paper we deal only with the case $p>0$ and are particularly interested in the equality conditions of BBL, that are discussed in \cite{Du} (see Theoreme 12 therein). 
To avoid triviality, if not otherwise explicitly declared, we will assume throughout the paper that $f,\,g\in L^1(\rn)$ are nonnegative compactly supported functions (with supports $\supp(f)$ and $\supp(g)$) such that
$$
F=\int_{\rn}f\,dx>0\quad\text{and}\quad G=\int_{\rn}g\,dx>0\,.
$$

Let us restate a version of the BBL inequality including its equality condition in the case 
$$p=\frac1s>0\,,$$ 
adopting a slightly different notation.
 \begin{prop}\label{bbleq}
Let $s>0$ and $f,g$ be as said above. Let $\lambda\in(0,1)$ and $h$ be a nonnegative function belonging to $L^1(\rn)$ such that
\begin{equation}\label{assumptionh}
h((1-\lambda)x+\lambda y)\geq \left((1-\lambda)f(x)^{1/s}+\lambda g(y)^{1/s}\right)^s
\end{equation}
for every $x\in\supp(f)$, $y\in\supp(g)$.

Then
\begin{equation}\label{bbls}
\int_{\rn} h\ dx  \geq \mathcal{M}_{\frac{1}{n+s}}\left(F,G;\lambda\right)\,.
\end{equation}
Moreover equality holds in \eqref{bbls} only if there exists a nonnegative concave function $\phi$ such that
\begin{equation}\label{equalityBBLeq}
\phi(x)^s=a_1\,f(b_1x-\bar x_1)=a_2\,g(b_2x-\bar x_2)=a_3\,h(b_3x-\bar x_3)\quad\text{a.e. }x\in\rn\,,
\end{equation}
for some $\bar x_1,\bar x_2,\bar x_3\in\rn$ and suitable $a_i,b_i>0$ for $i=1,2,3$.
\end{prop}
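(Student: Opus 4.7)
The plan is to follow the classical proof of the BBL inequality for $p>0$ via superlevel sets plus the Brunn--Minkowski inequality, and then extract the equality conditions from the rigidity of the two intermediate inequalities involved.

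First I would linearize the hypothesis by setting $\tilde f=f^{1/s}$, $\tilde g=g^{1/s}$, $\tilde h=h^{1/s}$. Then \eqref{assumptionh} takes the concavity-like form $\tilde h((1-\lambda)x+\lambda y)\geq (1-\lambda)\tilde f(x)+\lambda\tilde g(y)$, whence for each $r>0$ the Minkowski inclusion of superlevel sets
$$\{\tilde h> r\}\supseteq (1-\lambda)\{\tilde f> r\}+\lambda\{\tilde g> r\}$$
holds. Applying Brunn--Minkowski level-by-level yields $V_{\tilde h}(r)^{1/n}\geq (1-\lambda)V_{\tilde f}(r)^{1/n}+\lambda V_{\tilde g}(r)^{1/n}$, where $V_u(r)=\abs{\{u>r\}}$.

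Second, with the layer-cake formula and the substitution $t=r^s$ one has
$$\int_{\rn} u\,dx=s\int_0^{\infty} V_{\tilde u}(r)\,r^{s-1}\,dr\quad\text{for } u\in\{f,g,h\}.$$
Substituting the Brunn--Minkowski estimate, raising to the power $n$ inside, and applying a generalized integral Minkowski / reverse-Hölder inequality to the resulting one-dimensional integrals against the weight $r^{s-1}$ would produce exactly the $1/(n+s)$-mean on the right-hand side of \eqref{bbls}. This is essentially the argument going back to Henstock--Macbeath, Dinghas and Borell.

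Third, the equality characterization would be extracted from the rigidity of the two intermediate steps. Equality in \eqref{bbls} forces equality a.e.\ in the level-wise Brunn--Minkowski inequality, so for almost every $r>0$ the sets $\{\tilde f>r\}$ and $\{\tilde g>r\}$ must be homothetic convex bodies (up to null sets), and $\{\tilde h>r\}$ must be their Minkowski convex combination. It also forces equality in the 1D integral inequality, which pins down the scaling ratio between $V_{\tilde f}(r)^{1/n}$, $V_{\tilde g}(r)^{1/n}$, $V_{\tilde h}(r)^{1/n}$ and the centers of these bodies to be independent of $r$. Given this uniform homothety structure, one then defines the nonnegative concave function $\phi$ whose superlevel sets are the common (normalized) convex body at the appropriate heights; tracking the scaling constants produces the constants $a_i,b_i,\bar x_i$ and the identification \eqref{equalityBBLeq}.

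The main obstacle is precisely this last propagation step. The level-by-level Brunn--Minkowski equality only gives, at each height $r$, that the three superlevel sets are homothetic; upgrading this to a \emph{single} triple $(a_i,b_i,\bar x_i)$ valid simultaneously for all $r$, and hence to the existence of one common concave $\phi$ realizing all three functions as in \eqref{equalityBBLeq}, requires the full rigidity of the 1D integral step and not merely the rigidity of Brunn--Minkowski. This is the delicate part carried out in Théorème 12 of \cite{Du}, which we can invoke directly once the reformulation above is set up.
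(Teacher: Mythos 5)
Your proposal is correct in substance, but it takes a genuinely different route from the one the paper actually develops. The paper proves the inequality part of this proposition (see Propositions~\ref{BBL} and~\ref{BBL2}) via Klartag's lifting argument: to each nonnegative $u$ on $\R^n$ one attaches the body $K_{u,s}\subset\R^{n+s}$ of \eqref{solido} (the union of $s$-dimensional balls of radius $u(x)^{1/s}$ over $\supp u$), observes $|K_{u,s}|=\omega_s\int u$, checks that $K_{h_\lambda,s}=(1-\lambda)K_{f,s}+\lambda K_{g,s}$, and applies Brunn--Minkowski \emph{once} in $\R^{n+s}$. You instead run the classical Henstock--Macbeath/Dinghas/Borell scheme: pass to $\tilde u=u^{1/s}$, note the Minkowski inclusion of superlevel sets, apply Brunn--Minkowski level-by-level in $\R^n$, and then aggregate with a weighted one-dimensional integral lemma (weight $r^{s-1}$, producing the index $1/(n+s)=(1/n)\big/(s/n+1)$). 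Both routes are valid proofs of \eqref{bbls}, and both defer the equality characterization \eqref{equalityBBLeq} to Dubuc's Th\'eor\`eme~12, so you are on equal footing there. The paper's choice is not cosmetic, however: reducing BBL to a \emph{single} set-theoretic Brunn--Minkowski instance in a fixed dimension $n+s$ is precisely what lets the Figalli--Jerison stability theorem (Proposition~\ref{FJ}) be imported wholesale in Section~3. Your level-by-level decomposition, while adequate for the inequality itself, would make the stability analysis much harder, since the Brunn--Minkowski deficit would have to be controlled simultaneously at every height $r$ and then reassembled through the 1D integral step.
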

Notice that, given $f$ and $g$, the smallest function satisfying \eqref{assumptionh} (hence the smallest 
function to which Proposition \ref{bbleq} possibly applies to) is 
their  $p$-Minkowksi sum (or $(p,\lambda)$-supremal convolution), defined as follows (for $p= \frac{1}{s}$)
\begin{equation}\label{hdef}
h_{s,\lambda}(z) = \sup\gra{\left((1-\lambda)f(x)^{1/s}+\lambda g(y)^{1/s}\right)^s\,:\,  z=(1-\lambda)x +\lambda y}
\end{equation}
for $z\in(1-\lambda)\supp(f)+\lambda\supp(g)$ and $h_{s,\lambda}(z)=0$ if $z\notin(1-\lambda)\supp(f)+\lambda\supp(g)$.
\medskip

When dealing with a rigid inequality, a natural question arises about the stability of the equality case; here the question at hand is the following: if we are close to equality in \eqref{bbls}, must the functions $f,\,g$ and $h$ be close (in some suitable sense) to satisfy \eqref{equalityBBLeq}?

The investigation of stability issues in the case $p=0$ was started by Ball and B\"{o}r\"{o}czky in \cite{BB1, BB2} and new related results are in \cite{BF}. The general case $p>0$ has been very recently faced in \cite{GS}. But the results of \cite{GS}, as well as the quoted results for $p=0$, hold only in the restricted class of $p$-concave functions, hence answering only a half of the question.
Let us recall here the definition of $p$-concave function: a nonnegative function $u$ is {\em $p$-concave} for some $p\in\R\cup\{\pm\infty\}$ if
$$
u((1-\lambda)x+\lambda y)\geq\mathcal{M}_p(u(x), u(y); \lambda)\quad\text{for every }x,y\in\rn\text{ and  every }\lambda\in(0,1)\,.
$$
Roughly speaking, $u$ is $p$-concave if it has convex support $\Omega$ and: (i) $u^p$ is concave in $\Omega$ for $p>0$; (ii) $\log u$ is concave in $\Omega$ for $p=0$; (iii) $u^p$ is convex in $\Omega$ for $p<0$; (iv) $u$ is quasi-concave, i.e. all its superlevel sets are convex, for $p=-\infty$; (v) $u$ is a positive constant in $\Omega$, for $p=+\infty$.

Here we want to remove this restriction, proving that near equality in \eqref{bbls} is possible if and only if the involved functions are close to coincide up to homotheties of their graphs and they are also nearly $p$-concave, in a suitable sense. 
But before stating our main result in detail, we need to introduce some notation: for $s>0$, we say that two functions $v,\hat{v}:\rn\to[0,+\infty)$ are  {\em $s$-equivalent}  if there exist $\mu_v>0$ and $\bar x\in\rn$ such that 
\begin{eqnarray}
\hat{v}(x)= \mu_v^s  \,v\pa{\frac{x-\bar x}{\mu_v}}\qquad \text{a.e. }x\in\rn.\label{s-equivalent}
\end{eqnarray}
Now we are ready to state our main result, which regards the case $s=1/p\in\N$. Later (see \S4) we will extend the result to the case $0<s\in\Q$ in Corollary \ref{corstab} and finally (see Corollary \ref{stabparteintera} in \S5) we will give a slightly weaker version, valid for every $s>0$.
\begin{teor}\label{stab1}
Let $f,g,h$ as in Proposition \ref{bbleq} with
$$0<s\in\N\,.
$$ 
Assume that \begin{eqnarray}  
\int_{\R^n} h\ dx \leq \mathcal{M}_{\frac{1}{n+s}} \left(F, G\,; \lambda \right)
+ \epsilon\label{ipstab}
\end{eqnarray}
for some $\epsilon > 0 $ small enough. 

Then there exist a $\frac1s$-concave function $u: \R^n \longrightarrow [0,+\infty)$ and two functions $\hat{f}$ and $\hat{g}$, 
$s$-equivalent to $f$ and $g$ in the sense of \eqref{s-equivalent} 
(with suitable $\mu_f$ and $\mu_g$ given in \eqref{mufmug})
such that the following hold:
\begin{equation}\label{tesistab0} 
 \qquad  u \geq \hat{f},   \qquad  \qquad  u \geq \hat{g}\,, 
\end{equation}
\begin{equation} \int_{\R^n} (u-\hat{f}) \ dx \ + \ \int_{\R^n} \pa{u-\hat{g}} \ dx 
 \  \leq \ C_{n+s}\pa{\frac{\epsilon}{\mathcal{M}_{\frac{1}{n+s}} \left(F, G\,; \lambda \right)}}\,, \label{tesistab}
\end{equation}
where $C_{n+s}(\eta)$ is an infinitesimal function for $\eta \longrightarrow 0$ (whose explicit expression is
given later, see \eqref{explicitCns}).
\end{teor}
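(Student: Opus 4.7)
The plan is to lift the problem from $\R^n$ to $\R^{n+s}$ and reduce it to a quantitative stability statement for the Brunn--Minkowski inequality, exploiting that $s$ is a positive integer. To any nonnegative, integrable, compactly supported $v:\R^n\to[0,+\infty)$ associate the set
\[
K_v \ = \ \{(x,z)\in\R^n\times\R^s \,:\, x\in\supp(v),\ |z|\le v(x)^{1/s}\} \ \subset \ \R^{n+s},
\]
so that by Fubini $|K_v| = \omega_s \int_{\R^n} v\,dx$, where $\omega_s$ is the Lebesgue measure of the Euclidean unit ball in $\R^s$. Using the triangle inequality in $\R^s$ together with \eqref{assumptionh}, one checks directly that $(1-\lambda)K_f+\lambda K_g \subset K_h$. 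Hence the Brunn--Minkowski inequality in $\R^{n+s}$,
\[
|K_h|^{\frac{1}{n+s}} \ \ge\ (1-\lambda)|K_f|^{\frac{1}{n+s}} + \lambda\,|K_g|^{\frac{1}{n+s}},
\]
reproduces \eqref{bbls}. In particular the hypothesis \eqref{ipstab} amounts to near-equality in Brunn--Minkowski for the pair $(K_f,K_g)$ in $\R^{n+s}$, with deficit quantitatively controlled by $\epsilon/\mathcal{M}_{1/(n+s)}(F,G;\lambda)$.

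Observe that the $s$-equivalence \eqref{s-equivalent} at the level of functions corresponds exactly to an isotropic dilation (plus translation) $(x,z)\mapsto(\mu(x+\bar x),\mu z)$ in $\R^{n+s}$. I would therefore replace $f,g$ by $s$-equivalent $\hat f,\hat g$, choosing the scaling factors $\mu_f,\mu_g$ of \eqref{mufmug} so that $K_{\hat f}$ and $K_{\hat g}$ have the same $(n+s)$-dimensional measure. Then I would invoke a quantitative stability result for Brunn--Minkowski applied to the pair $(K_{\hat f},K_{\hat g})$ to produce a convex body $C\subset\R^{n+s}$ simultaneously containing $K_{\hat f}$ and $K_{\hat g}$ and with
\[
|C\setminus K_{\hat f}| + |C\setminus K_{\hat g}| \ \le \ \omega_s \, C_{n+s}\!\left(\epsilon/\mathcal{M}_{1/(n+s)}(F,G;\lambda)\right).
\]

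To turn $C$ into a $\frac{1}{s}$-concave majorant $u:\R^n\to[0,+\infty)$, I would Schwarz-symmetrize $C$ in the $z$-variable: for each $x$ in the projection, replace the slice $C_x\subset\R^s$ by the closed Euclidean ball centred at $0$ of the same $s$-dimensional measure. By Brunn--Minkowski applied to slices of a convex body, $x\mapsto|C_x|^{1/s}$ is concave, so the symmetrized set has the form $K_u$ with $u^{1/s}$ concave, i.e.\ $u$ is $\frac{1}{s}$-concave. Since $K_{\hat f}$ and $K_{\hat g}$ are already centred balls in the $z$-direction at each $x$, the rearrangement preserves both inclusions $K_{\hat f}\subset K_u$ and $K_{\hat g}\subset K_u$, which is precisely \eqref{tesistab0}. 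Finally, the identity $|K_u\setminus K_{\hat v}|=\omega_s\int_{\R^n}(u-\hat v)\,dx$ (for $v=f,g$), together with $|K_u|\le|C|$, converts the volume gap above into the $L^1$-estimate \eqref{tesistab}.

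\textbf{Main obstacle.} The most delicate ingredient is the Brunn--Minkowski stability step: one needs a quantitative version that, after the homothetic normalization, produces a \emph{single common} convex body $C$ containing both factors $K_{\hat f},K_{\hat g}$ together with the volume-deficit bound above, rather than the more customary conclusion of closeness of each factor to some convex set in Hausdorff or symmetric-difference sense. Assembling such a conclusion, with explicit modulus $C_{n+s}(\eta)$, and checking that it survives the Schwarz-symmetrization that restores the functional form $K_u$, is the technical heart of the argument.
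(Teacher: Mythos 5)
Your proposal takes essentially the same route as the paper: lift to $\R^{n+s}$ via the sets $K_v=\{(x,z):x\in\supp(v),\ |z|\le v(x)^{1/s}\}$, observe that \eqref{assumptionh} gives $(1-\lambda)K_f+\lambda K_g\subseteq K_h$ so that \eqref{ipstab} becomes a near-equality for Brunn--Minkowski, invoke the Figalli--Jerison quantitative stability theorem (this is the ``quantitative stability result'' you allude to --- Proposition~\ref{FJ} and its normalization-free form Corollary~\ref{corFJ} in the paper), and then Schwarz-symmetrize in the $\R^s$-fibre to restore the functional form $K_u$ for a $\frac1s$-concave $u$. The one point that deserves care, which you gloss over, is that after applying Figalli--Jerison the homothetic copy of $K_g$ may have been translated in the $z$-direction as well, so $\hat K_g$ is \emph{not} automatically ``centred balls in the $z$-direction at each $x$''; the paper addresses this precisely by noting that the symmetrization $S(\hat K_g)$ equals the purely horizontal translate (see~\eqref{simmg}), which is again of the form $K_{\hat g}$. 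With that caveat the inclusions and the $L^1$-estimate follow as you describe.
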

Notice that the function $u$ is bounded, hence as a byproduct of the proof we obtain that the functions $f$ 
and $g$ have to be bounded as well (see Remark \ref{rem1}).\\
The proof of the above theorem is based on a proof of the BBL inequality due to Klartag \cite{Kla}, which directly connects the BBL inequality to the Brunn-Minkowski inequality, and the consequent application of a recent stability result for the Brunn-Minkowski inequality by Figalli and Jerison \cite{FJ}, which does not require any convexity assumption of the involved sets. Indeed \cite{FJ} is the first paper, at our knowledge, investigating on stability issues for the Brunn-Minkowski inequality outside the realm of convex bodies. Noticeably, Figalli and Jerison ask therein for a functional counterpart of their result, pointing out that {\em "at the moment some stability
estimates are known for the  Pr\'{e}kopa-Leindler inequality only in one dimension or for some special class of functions \cite{BB1, BB2},
and a general stability result would be an important direction of future investigations."}.  Since BBL inequality is the functional counterpart of the Brunn-Minkowksi inequality (for any $p>0$ as much as for $p=0$), this paper can be considered a first answer to the question by Figalli and Jerison.
\medskip

The paper is organized as follows.
The Brunn-Minkowski inequality and the stability result of \cite{FJ} are recalled in \S2, where we also discuss the equivalence between the Brunn-Minkowski and the BBL inequality. In \S3 we prove Theorem \ref{stab1}. 
Finally \S4 contains the already mentioned generalization to the case of rational $s$, namely Corollary \ref{corstab}, while \S5  is devoted to Corollary \ref{stabparteintera}, where we prove a stability for every $s>0$ under a suitable normalization for $\int f$ and $\int g$.
The paper ends with an Appendix (\S6) where we give the proofs of some easy technical lemmas
for the reader's convenience.

\bigskip

{\bf Acknowledgements.} The second author has been partially supported
by INdAM in the framework of a GNAMPA project, and by MIUR in the framework of a PRIN 2013 project and a FIR 2013 project.

\section{Preliminaries}
\subsection{Notation} Throughout the paper the symbol $|\cdot|$ is used to denote different things and we hope this is not going to cause confusion. In particular: for a real number $a$ we denote by $|a|$ its absolute value, as usual; for a vector $x=(x_1,\dots,x_m)\in\R^m$ we denote by $|x|$ its euclidean norm, that is $|x|=\sqrt{x_1^2+\dots+x_m^2}$; for a set $A\subset\R^m$ we denote by $|A|$ its ($m$-dimensional) Lebesgue measure or, sometimes, its outer measure if $A$ is not measurable.

The support set of a nonnegative function $f:\R^m\to[0,+\infty)$ is denoted by $\supp(f)$, that is $\supp(f)=\overline{\{x\in\R^m\,:\,f(x)>0\}}$.

Let $\lambda \in (0,1)$, the Minkowski convex combination (of coefficient $\lambda$)
of two nonempty sets $A,B \subseteq \R^n$ is given by
\[(1-\lambda)A + \lambda B = 
\gra{(1-\lambda)a + \lambda b:\ a \in A,\ b \in B}.\]

\subsection{About the Brunn-Minkowski inequality}
The classical form of the Brunn-Minkowski inequality (BM in the following) regards only convex bodies and it is at the core of the related theory (see \cite{sch}). Its validity has been extended later to the class of measurable sets and we refer to the beautiful paper by Gardner \cite{gardner} for a throughout presentation of BM inequality, its history and its intriguing relationships with many other important geometric and analytic inequalities. 
Let us now recall it (in its general form).
\begin{prop}[Brunn-Minkowski inequality]\label{BM}
Given $\lambda \in (0,1)$, let $A, B \subseteq \R^n$ be nonempty measurable
sets. 
Then
\begin{equation}
\abs{(1-\lambda)A + \lambda B}^{1/n} \geq  (1-\lambda)\abs{A}^{1/n} + \lambda \abs{B}^{1/n}\, \label{tesiBM}
\end{equation}
(where $|\cdot|$ possibly means outer measure if $(1-\lambda)A + \lambda B$ is not measurable).

In addition, if $\abs{A},\abs{B}>0$, then equality in \eqref{tesiBM} holds if and only if
there exist a convex set $K\subseteq \R^n$, $v_1,v_2\in \R^n$ and $\lambda_1,\lambda_2 >0$ such that
\begin{equation}
\lambda_1 A + v_1 \subseteq K, \quad \lambda_2 B + v_2 \subseteq K, \quad   
\abs{K\setminus\pa{\lambda_1 A + v_1}}= \abs{K\setminus\pa{\lambda_2 B + v_2}} =0.  \label{uguaBM}
\end{equation}
\end{prop}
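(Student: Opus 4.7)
The proposal splits into two distinct tasks: (A) proving the inequality $|(1-\lambda)A+\lambda B|^{1/n}\geq(1-\lambda)|A|^{1/n}+\lambda|B|^{1/n}$ for measurable sets, and (B) establishing the equality characterization \eqref{uguaBM}. I would handle them separately.

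For (A), I would use the classical Hadwiger--Ohmann induction scheme. First, when $A=\prod_{i=1}^n[0,a_i]$ and $B=\prod_{i=1}^n[0,b_i]$ are axis-aligned boxes, the Minkowski combination is itself the box $\prod_{i=1}^n[0,(1-\lambda)a_i+\lambda b_i]$, so the inequality reduces to
\[
\prod_{i=1}^n\frac{(1-\lambda)a_i+\lambda b_i}{\big[(1-\lambda)a_i+\lambda b_i\big]}\geq (1-\lambda)\prod_{i=1}^n\frac{a_i}{(1-\lambda)a_i+\lambda b_i}+\lambda\prod_{i=1}^n\frac{b_i}{(1-\lambda)a_i+\lambda b_i},
\]
which is a direct application of the arithmetic--geometric mean inequality. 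Next, for finite unions of essentially disjoint boxes I would induct on the total number $N=N_A+N_B$ of boxes: pick a coordinate hyperplane that separates two boxes of $A$, split $A$ into $A^\pm$, then translate $B$ so that the parallel hyperplane splits $B$ into $B^\pm$ with $|A^\pm|/|A|=|B^\pm|/|B|$. The combinations $(1-\lambda)A^\pm+\lambda B^\pm$ lie in disjoint half-spaces, each pair uses strictly fewer boxes, and a one-variable AM--GM manipulation closes the induction. Finally I would pass from finite unions of boxes to compact sets by inner approximation (dyadic covering), and from compact to general measurable sets via inner regularity, with the outer-measure caveat absorbing possible non-measurability of the sum.

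For (B), the rigidity part, I would reduce the general case to the known characterization for convex bodies. The classical argument of Minkowski (also obtainable from the equality case of the Alexandrov--Fenchel inequality) states that two convex bodies $K_1,K_2$ with $|(1-\lambda)K_1+\lambda K_2|^{1/n}=(1-\lambda)|K_1|^{1/n}+\lambda|K_2|^{1/n}$ must be homothetic. Consider the convex hulls $\widetilde A=\mathrm{conv}(A)$ and $\widetilde B=\mathrm{conv}(B)$: since $(1-\lambda)\widetilde A+\lambda\widetilde B\supseteq(1-\lambda)A+\lambda B$ and $|\widetilde A|\geq|A|$, $|\widetilde B|\geq|B|$, applying BM to $\widetilde A,\widetilde B$ and chaining with our hypothesis forces equality throughout; in particular $|A|=|\widetilde A|$, $|B|=|\widetilde B|$, and $\widetilde A,\widetilde B$ are homothetic. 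Choosing $\lambda_1,\lambda_2>0$ and translations $v_1,v_2$ realizing the homothety, one lands at the common convex body $K$ and at \eqref{uguaBM}.

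The main obstacle is the rigidity step for non-convex measurable sets: replacing $A$ by $\mathrm{conv}(A)$ is not a measure-preserving operation in general, so the reduction sketched above is subtler than it appears and one really needs a direct one-dimensional base case plus a Fubini-type slicing induction on $n$ to conclude that $A$ and $B$ agree with their convex hulls up to null sets after the affine normalization. This is exactly the content of Dubuc's Théorème~12 in \cite{Du} (already invoked earlier in the paper for the equality case of BBL), and I would appeal to it to translate the one-dimensional rigidity through dimensions and produce the convex set $K$ in \eqref{uguaBM}.
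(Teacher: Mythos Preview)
The paper does not prove Proposition~\ref{BM}; it is quoted as a classical result with references to Gardner's survey \cite{gardner} and Schneider's book \cite{sch}, so there is no ``paper's own proof'' to compare your attempt against.

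On your proposal itself: part~(A) is a correct outline of the standard Hadwiger--Ohmann bisection argument and would go through. Part~(B), however, contains a genuine gap that you already diagnose but do not repair. The convex-hull reduction fails because the inequalities run the wrong way: from $(1-\lambda)\widetilde A+\lambda\widetilde B\supseteq(1-\lambda)A+\lambda B$ you only get a \emph{lower} bound $|(1-\lambda)\widetilde A+\lambda\widetilde B|\geq|(1-\lambda)A+\lambda B|$, and combined with BM applied to $\widetilde A,\widetilde B$ this gives two lower bounds and nothing that forces $|\widetilde A|=|A|$ or equality in BM for the hulls. So the chain ``forces equality throughout'' is not justified. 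Your fallback to Dubuc \cite{Du} is logically acceptable (BM is the $p=+\infty$ specialization of BBL), but note that in the architecture of this paper BM is used to \emph{derive} BBL, so invoking Dubuc's BBL rigidity to prove BM rigidity is at best awkward; the self-contained route for measurable sets is the Henstock--Macbeath argument \cite{HM}, which proceeds by induction on the dimension from the one-dimensional equality case rather than via convex hulls.
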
 
We remark that equality holds in \eqref{tesiBM} if and only if the involved sets are convex (up to a null measure set) and homothetic.

The stability of BM inequality was first investigated only in the class of convex sets, see for instance \cite{Diskant, EK, Gro, FMP1, FMP2, Segal}. Very recently Christ  \cite{Ch1, Ch2} started the investigation without convexity assumptions, and its qualitative results have been made quantitative and sharpened by Figalli and Jerison in \cite{FJ}; here is their result, for $n\geq 2$.
\begin{prop}\label{FJ}
Let $n\geq 2,$ and $A,B\subset \R^n$ be 
measurable sets with $|A|=|B|=1$. Let $\lambda\in(0,1)$, set $\tau= \min\gra{\lambda, 1-\lambda}$ and 
$S=(1-\lambda)A +\lambda B$. If
 \begin{eqnarray}
{\abs{S}} \ \leq \ 1+\delta  \label{ipotFJ}
\end{eqnarray}
for some $\delta\leq e^{-M_n(\tau)}$, then there exists a convex $K\subset \R^n$ such that, up to a translation,
\[  A,B \subseteq K  \qquad \text{and} \qquad 
\abs{K\setminus A} + \abs{K\setminus B} \leq \tau^{-N_n} \delta^{\sigma_n(\tau)}.\]
The constant $N_n$ can be explicitly computed and we can take
\[  M_n(\tau)= \frac{2^{3^{n+2}} n^{3^n} \abs{\log \tau}^{3^n}}{\tau^{3^n}},
\qquad  \sigma_n(\tau)= \frac{\tau^{3^n}}{2^{3^{n+1}} n^{3^n} \abs{\log \tau}^{3^n}}.   \]
\end{prop}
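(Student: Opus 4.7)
The plan is to reduce BBL stability to the Brunn--Minkowski stability of Figalli--Jerison via Klartag's geometric lift to $\R^{n+s}$, exploiting the integrality of $s$. To any nonnegative $\phi\in L^1(\R^n)$ I associate the body
\[K_\phi:=\gra{(x,y)\in\R^n\times\R^s:|y|\le\phi(x)^{1/s}},\]
whose measure equals $\omega_s\int_{\R^n}\phi\,dx$, with $\omega_s$ the volume of the unit ball in $\R^s$. The hypothesis \eqref{assumptionh}, combined with the triangle inequality in $\R^s$, gives the inclusion $(1-\lambda)K_f+\lambda K_g\subseteq K_h$, which is the geometric content of BBL for integer $s$.

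I then renormalise so Proposition \ref{FJ} applies. Setting $\alpha:=(\omega_sF)^{1/(n+s)}$, $\beta:=(\omega_sG)^{1/(n+s)}$, $\mu_f:=1/\alpha$, $\mu_g:=1/\beta$, and letting $\hat f,\hat g$ be the corresponding $s$-equivalents (with translation centres to be fixed in a moment), the sets $A:=K_{\hat f}=\mu_fK_f$ and $B:=K_{\hat g}=\mu_gK_g$ have unit measure. Moreover
\[(1-\lambda)K_f+\lambda K_g=(1-\lambda)\alpha A+\lambda\beta B=\bigl((1-\lambda)\alpha+\lambda\beta\bigr)\bigl((1-\lambda')A+\lambda'B\bigr),\]
with $\lambda':=\lambda\beta/((1-\lambda)\alpha+\lambda\beta)$. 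Since $((1-\lambda)\alpha+\lambda\beta)^{n+s}=\omega_sM$ where $M:=\mathcal M_{1/(n+s)}(F,G;\lambda)$, the hypothesis \eqref{ipstab} yields
\[|(1-\lambda')A+\lambda'B|\le\frac{|K_h|}{\omega_sM}=\frac{\int h\,dx}{M}\le 1+\frac{\epsilon}{M}.\]
Taking $\epsilon$ small enough that $\epsilon/M\le e^{-M_{n+s}(\tau)}$ with $\tau:=\min\{\lambda',1-\lambda'\}$, Proposition \ref{FJ} produces a convex $K\subset\R^{n+s}$ with $A,B\subseteq K$ (after a translation of each, which I absorb into the centres $\bar x_i$ in the definition of $s$-equivalence) and $|K\setminus A|+|K\setminus B|\le\tau^{-N_{n+s}}(\epsilon/M)^{\sigma_{n+s}(\tau)}$.

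To extract a function from $K$, I Schwarz-symmetrise in the last $s$ coordinates: each slice $K\cap(\{x\}\times\R^s)$ is replaced by the Euclidean ball in $\R^s$ of equal measure centred at the origin. Brunn's concavity principle makes $x\mapsto|K\cap(\{x\}\times\R^s)|^{1/s}$ concave, so the resulting set $K^\ast$ is convex and of the form $K^\ast=\{(x,y):|y|\le\rho(x)\}$ with $\rho\ge 0$ concave. Since $A$ and $B$ are already $y$-rotationally symmetric they equal their own symmetrisations, whence monotonicity of the symmetrisation gives $A,B\subseteq K^\ast$ and measure-preservation yields $|K^\ast\setminus A|=|K^\ast|-|A|=|K|-|A|=|K\setminus A|$ (and similarly for $B$). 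Setting $u:=\rho^s$, the function $u$ is $1/s$-concave, slice-by-slice comparison gives $u\ge\hat f$ and $u\ge\hat g$, and Fubini converts the set estimate into
\[\int_{\R^n}(u-\hat f)\,dx+\int_{\R^n}(u-\hat g)\,dx=\frac{|K^\ast\setminus A|+|K^\ast\setminus B|}{\omega_s}\le\frac{\tau^{-N_{n+s}}}{\omega_s}\Bigl(\frac{\epsilon}{M}\Bigr)^{\sigma_{n+s}(\tau)},\]
which is \eqref{tesistab} with $C_{n+s}(\eta)$ proportional to $\eta^{\sigma_{n+s}(\tau)}$.

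The main technical point will be the Schwarz-symmetrisation argument: I must invoke Brunn's theorem to pass convexity to $K^\ast$ and check that the containment $A,B\subseteq K$ survives the symmetrisation (which uses crucially that $A$ and $B$ are already symmetric). Everything else is a clean translation of Proposition \ref{FJ} through the lift $\phi\mapsto K_\phi$, which is available precisely because $s\in\N$ turns $\R^{n+s}$ into a genuine Euclidean space; the restriction to integer $s$ enters the proof at exactly this point.
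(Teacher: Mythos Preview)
Your proposal does not address the stated proposition. Proposition~\ref{FJ} is the Figalli--Jerison quantitative Brunn--Minkowski stability theorem, which the paper \emph{cites} from \cite{FJ} without proof; there is no ``paper's own proof'' of it to compare against. What you have written is instead a proof sketch of Theorem~\ref{stab1}, the main BBL stability result, which \emph{uses} Proposition~\ref{FJ} as a black box. So as a proof of the target statement your proposal is simply off-topic: you have assumed the very result you were asked to prove and then applied it.

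That said, your argument for Theorem~\ref{stab1} is essentially the paper's own proof in \S3: the Klartag lift $\phi\mapsto K_\phi$, renormalisation to unit-measure sets, application of Figalli--Jerison in $\R^{n+s}$, Schwarz/Steiner symmetrisation in the last $s$ coordinates to recover a set of the form $K_u$, and Fubini to translate back to integrals. One point deserves more care than you give it: the translation produced by Proposition~\ref{FJ} lives in $\R^{n+s}$ and may have a nonzero $\R^s$-component, which \emph{cannot} be ``absorbed into the centres $\bar x_i$'' of the $s$-equivalence (those live in $\R^n$). The paper handles this by symmetrising \emph{first} and observing that the $S$-symmetrisation of a $y$-translate of $K_f$ coincides with the purely horizontal translate (equations \eqref{simmf}--\eqref{simmg}); only after symmetrisation can the translation be identified with a shift of $\bar x$. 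Your write-up reverses this order. A second minor discrepancy: you work with $\tau=\min\{\lambda',1-\lambda'\}$ for the reweighted $\lambda'$, whereas the paper's stated bound \eqref{explicitCns} uses $\tau=\min\{\lambda,1-\lambda\}$ via Corollary~\ref{corFJ}; these do not agree in general, so the explicit constant you obtain differs from the one claimed in the theorem.
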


\begin{oss} As already said, the proof of our main result is based on Proposition \ref{FJ} and now we can give the explicit expression of the infinitesimal function $C_{n+s}$ of Theorem \ref{stab1}: 
\begin{equation}\label{explicitCns}C_{n+s}(\eta)=\frac{\eta^{\sigma_{n+s}(\tau)}}{\omega_s\,  \tau^{N_{n+s}}\,,}\,,
\end{equation}
where $\omega_s$ denotes the measure of the unit ball in $\R^s$.
\end{oss}

Next, for further use, we rewrite Proposition \ref{FJ} without the normalization constraint about the measures of the involved sets $A$ and $B$.
\begin{cor}\label{corFJ}
Let $n\geq 2$ and $A,B\subset \R^n$ be 
measurable sets with $\abs{A},\abs{B}\in(0,+\infty)$. Let $\lambda\in(0,1)$, set $\tau= \min\gra{\lambda, 1-\lambda}$ and
$S=(1-\lambda)A +\lambda B$. 
If \begin{eqnarray}
 \frac{\abs{S} - \qua{(1-\lambda)\abs{A}^{1/n} + \lambda \abs{B}^{1/n}}^n}
 {\qua{(1-\lambda)
 \abs{A}^{1/n} + \lambda\abs{B}^{1/n}}^n}
  \leq \delta  \label{ipotcorFJ}
  \end{eqnarray}
for some $\delta \leq e^{-M_n(\tau)}$, then there exist a convex $K\subset \R^n$  and two homothetic copies $\tilde{A}$ and $\tilde{B}$ of $A$ and $B$ such that
\[  \tilde{A},\tilde{B} \subseteq K  \qquad \text{and} \qquad 
\abs{K\setminus \tilde{A}} + \abs{K\setminus\tilde{B}} \leq \tau^{-N_n} \delta^{\sigma_n(\tau)}.\]
\end{cor}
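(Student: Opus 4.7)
The plan is to reduce Corollary \ref{corFJ} directly to Proposition \ref{FJ} by a scaling argument. Set $\alpha=|A|^{1/n}$ and $\beta=|B|^{1/n}$, and define the normalized sets $A'=\alpha^{-1}A$ and $B'=\beta^{-1}B$, so that $|A'|=|B'|=1$. A direct computation, factoring out the common scalar from the Minkowski combination, yields
$$(1-\lambda)A+\lambda B \;=\; c\bigl[(1-\lambda')A'+\lambda' B'\bigr],$$
where $c=(1-\lambda)\alpha+\lambda\beta$ and $\lambda'=\lambda\beta/c\in(0,1)$. Consequently $|S|=c^n|S'|$ for $S'=(1-\lambda')A'+\lambda'B'$, and the denominator in \eqref{ipotcorFJ} is exactly $c^n$; so the hypothesis simplifies to the clean inequality $|S'|\leq 1+\delta$, which is precisely assumption \eqref{ipotFJ} of Proposition \ref{FJ} applied to $A',B'$ with weight $\lambda'$.

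I would then invoke Proposition \ref{FJ} to produce a convex set $K'\subset\R^n$ together with translations $v_A,v_B\in\R^n$ such that $A'+v_A\subseteq K'$, $B'+v_B\subseteq K'$, and
$$|K'\setminus(A'+v_A)|+|K'\setminus(B'+v_B)|\;\leq\;(\tau')^{-N_n}\,\delta^{\sigma_n(\tau')},$$
where $\tau'=\min\{\lambda',1-\lambda'\}$. Setting $\tilde A=A'+v_A$, $\tilde B=B'+v_B$, and $K=K'$ finishes the construction: since $\tilde A=\alpha^{-1}A+v_A$ and $\tilde B=\beta^{-1}B+v_B$, these are genuine homothetic copies of $A$ and $B$ (with ratios $1/\alpha$ and $1/\beta$), both contained in $K$, and the required residual estimate is inherited immediately from Proposition \ref{FJ}.

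The only obstacle I anticipate is essentially cosmetic: the bound produced naturally by the rescaling involves $\tau'=\min\{\lambda',1-\lambda'\}$ rather than the $\tau=\min\{\lambda,1-\lambda\}$ written in the statement. When $|A|$ and $|B|$ differ substantially, $\tau'$ can be strictly smaller than $\tau$, so the two bounds are not literally equivalent and the corollary is most naturally read with $\tau$ replaced by $\tau'$; in the setting of Theorem \ref{stab1}, where the sets to which the corollary is applied have comparable measures by construction, the distinction is harmless. Apart from this issue of constants, the entire argument is a routine normalization.
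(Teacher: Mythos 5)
Your argument is exactly the normalization the paper uses: rescale $A$ and $B$ to unit volume, rewrite the Minkowski combination with the induced coefficient $\lambda'=\lambda\abs{B}^{1/n}/c$ (the paper writes the complementary weight $\mu=1-\lambda'$), verify that \eqref{ipotcorFJ} becomes $\abs{\tilde S}\leq 1+\delta$, and invoke Proposition \ref{FJ}. The route and the bookkeeping coincide.

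You are right to flag the discrepancy between $\tau'=\min\{\lambda',1-\lambda'\}$ and $\tau=\min\{\lambda,1-\lambda\}$, and in fact the paper's own proof does not address it: it simply asserts that Proposition \ref{FJ} ``implies the result'' while keeping the constants written with $\tau$, even though after renormalization the effective Minkowski weight is $\lambda'$, so Figalli--Jerison produces $M_n(\tau')$, $\sigma_n(\tau')$ and $(\tau')^{-N_n}$. Since $\lambda'=\lambda$ forces $\abs{A}=\abs{B}$ and $\lambda'=1-\lambda$ forces $\lambda^2\abs{B}^{1/n}=(1-\lambda)^2\abs{A}^{1/n}$, in general $\tau'\neq\tau$, and when $\tau'<\tau$ the admissible range of $\delta$ shrinks and the residual bound weakens. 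Where I would push back is on your final sentence: in the proof of Theorem \ref{stab1} the corollary is applied to $K_f$ and $K_g$, whose volumes are $\omega_s F$ and $\omega_s G$ with $F=\int f$ and $G=\int g$ unrelated positive numbers, so there is no ``comparable measures by construction.'' The honest reading is that Corollary \ref{corFJ}, and consequently the explicit constant \eqref{explicitCns}, should be stated with $\tau$ replaced by $\tau'$, that is, with constants depending also on the ratio $\abs{A}/\abs{B}$ (respectively on $F/G$). This does not affect the qualitative conclusion, since for fixed $F,G,\lambda$ the resulting $C_{n+s}$ is still infinitesimal as $\epsilon\to 0$, but it is not a purely cosmetic point: your remark correctly identifies a genuine imprecision that the paper leaves unaddressed.
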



\begin{proof}
The proof is standard and we give it just for the sake of completeness. 
First we set
\[ \tilde{A}=  \frac{A}{\abs{A}^{1/n}}, \qquad  \tilde{B}=  \frac{B}{\abs{B}^{1/n}}\]
so that $|\tilde{A}|=|\tilde{B}|=1$. Then we define
\[ \tilde{S} := \mu \tilde{A}+ (1-\mu)\tilde{B} \qquad \text{with} \quad
\mu= \frac{(1-\lambda)\abs{A}^{1/n}}{(1-\lambda)\abs{A}^{1/n} + \lambda \abs{B}^{1/n}}\,,\]
and observe that $|\tilde{S}|\geq 1$ by the Brunn-Minkowski inequality.
It is easily seen that
 \[\tilde{S} =  \frac{S}{(1-\lambda)\abs{A}^{1/n} + \lambda \abs{B}^{1/n}}\,.\]
Now we see that the hypothesis \eqref{ipotFJ} holds for $\tilde{A}, \tilde{B},\tilde{S}$, indeed
\[ \abs{\tilde{S}}-1 = 
 \frac{\abs{S}-\qua{(1-\lambda)\abs{A}^{1/n} + \lambda \abs{B}^{1/n}}^n}
 {\qua{(1-\lambda)\abs{A}^{1/n} + \lambda \abs{B}^{1/n}}^n}
 \leq \delta, \]
by \eqref{ipotcorFJ}.
Finally Proposition \ref{FJ} applied to $\tilde{A}, \tilde{B}$ and $\tilde{S}$ implies the result and
this concludes the proof.
\end{proof}

\subsection{The equivalence between BBL and BM inequalities}
The equivalence between the two inequalities is well known
and it becomes apparent as soon as one notices that the $(p,\lambda)$-supremal convolution defined in \eqref{hdef} corresponds
to the Minkowski linear combinations of the graphs of $f^{p}$ and $g^{p}$. In particular, for $p=1$, \eqref{eq0} coincides with
\eqref{tesiBM} where $A=\{(x,t)\in\R^{n+1}\,:\,0\leq t\leq f(x)\}$ and $B=\{(x,t)\in\R^{n+1}\,:\,0\leq t\leq g(x)\}$.

To be precise, that Proposition \ref{bbl} implies \eqref{tesiBM} is easily seen by applying \eqref{eq0}
to the case $f=\chi_A$, $g=\chi_B$, 
$h=\chi_{(1-\lambda)A+ \lambda B}$, $p=+\infty$.
The opposite implication can be proved in several ways; hereafter we present a proof due to Klartag \cite{Kla}, which
is particularly useful for our goals.\\

To begin, given two integers $n,s>0$, 
let $f: \R^n \longrightarrow [0,+\infty)$
be an integrable function 
with nonempty support
(to avoid the trivial case in which f is identically zero).
Following Klartag's notations and ideas \cite{Kla} (see also \cite{AKM}), we associate with $f$
the nonempty measurable set
\begin{eqnarray}
 K_{f,s} = \gra{(x,y)\in \R^{n+s}= \R^n \times \R^s: \ x \in \supp(f),\ \abs{y}\leq f(x)^{1/s}}, \label{solido}
\end{eqnarray}
where obviously $x\in \R^n$ and $y \in \R^s$.
In other words, $K_{f,s}$ is the subset of $\R^{n+s}$ obtained as union of the $s$-dimensional
closed balls of center $(x,0)$ and radius $f(x)^{1/s}$, for $x$ belonging to the support of $f$, or, if you prefer, the set in $\R^{n+s}$ obtained by rotating with respect to $y=0$ the $(n+1)$-dimensional set $\{(x,y)\in\R^{n+s}\,: 0\leq y_1\leq f(x)^{1/s},\,y_2=\dots=y_s=0\}$.\\
We observe that $K_{f,s}$ is convex if and only if $f$ is $(1/s)$-concave
(that is for us a function $f$ having compact convex support such that 
$f^{1/s}$ is concave on $\supp\pa{f}$). 
If $\supp(f)$ is compact, then $K_{f,s}$ is bounded if and only if $f$ is bounded.\\
Moreover, thanks to Fubini's Theorem, it holds
\begin{equation}
\abs{K_{f,s}} = \int_{\supp\pa{f}} \omega_s \cdot \pa{f(x)^{1/s}}^s\ dx 
= \omega_s \int_{\R^n} f(x)\ dx.
  \label{volK}
\end{equation}
In this way, the integral of $f$ coincides, up to the constant $\omega_s$,
with the volume of $K_{f,s}$. Now we will use this simple identity to prove Proposition \ref{bbleq} as a direct application of the BM inequality.

Although of course the set $K_{f,s}$ depends heavily on $s$, for simplicity from now on we will remove the subindex $s$ and just write $K_f$ for $K_{f,s}$. 

\medskip

Let us start with the simplest case, when $p=1/s$ with $s$ positive integer. 

\begin{prop}[BBL, case $1/p=s\in\N$]\label{BBL}
Let $n, s$ be positive integers, $\lambda \in (0,1)$ and
$f,g,h :\R^n \longrightarrow [0,+\infty)$ be integrable functions, with $\int f>0$ and $\int g>0$.
Assume that for any $x_0\in\supp(f),\ x_1 \in\supp(g)$
\begin{eqnarray}
h\pa{(1-\lambda)x_0 + \lambda x_1} \geq \qua{(1-\lambda)f(x_0)^{1/s} + \lambda g(x_1)^{1/s}}^s.  \label{ipotBBL}
\end{eqnarray}
Then
\begin{eqnarray}
 \pa{\int_{\R^n} h\ dx}^{\frac{1}{n+s}} \geq  
(1-\lambda) \pa{\int_{\R^n} f\ dx}^{\frac{1}{n+s}} +
\lambda \pa{\int_{\R^n} g\ dx}^{\frac{1}{n+s}}. \label{tesiBBL} 
\end{eqnarray}
\end{prop}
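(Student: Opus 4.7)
The proof will follow Klartag's reduction of BBL to the Brunn--Minkowski inequality, exploiting the sets $K_f, K_g, K_h\subset\R^{n+s}$ defined in \eqref{solido} and the volume identity \eqref{volK}.

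The plan is to first establish the key geometric inclusion
\[
(1-\lambda)K_f + \lambda K_g \ \subseteq\ K_h.
\]
Pick $(x_0,y_0)\in K_f$ and $(x_1,y_1)\in K_g$, so that $x_0\in\supp(f)$, $x_1\in\supp(g)$, $|y_0|\leq f(x_0)^{1/s}$ and $|y_1|\leq g(x_1)^{1/s}$. Writing $z=(1-\lambda)x_0+\lambda x_1$ and $w=(1-\lambda)y_0+\lambda y_1$, the triangle inequality in $\R^s$ followed by the BBL hypothesis \eqref{ipotBBL} yields
\[
|w|\ \leq\ (1-\lambda)|y_0|+\lambda|y_1|\ \leq\ (1-\lambda)f(x_0)^{1/s}+\lambda g(x_1)^{1/s}\ \leq\ h(z)^{1/s}.
\]
In particular $h(z)>0$, so $z\in\supp(h)$ and $(z,w)\in K_h$, which gives the claimed inclusion.

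Next I would apply the Brunn--Minkowski inequality (Proposition \ref{BM}) in dimension $n+s$ to the sets $K_f$ and $K_g$. Combining with the inclusion above, and noting that both $|K_f|$ and $|K_g|$ are positive because $\int f,\int g>0$, we obtain
\[
|K_h|^{1/(n+s)}\ \geq\ \bigl|(1-\lambda)K_f + \lambda K_g\bigr|^{1/(n+s)}\ \geq\ (1-\lambda)|K_f|^{1/(n+s)} + \lambda|K_g|^{1/(n+s)}.
\]
Finally, the identity \eqref{volK} turns each $|K_\cdot|$ into $\omega_s \int \cdot\, \dx$, and dividing through by $\omega_s^{1/(n+s)}$ produces exactly \eqref{tesiBBL}.

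There is really no serious obstacle here: the content of the argument is packed into the inclusion $(1-\lambda)K_f+\lambda K_g\subseteq K_h$, and the only minor point to keep in mind is measurability of the Minkowski sum (which is handled by the ``outer measure'' clause in Proposition \ref{BM}, so the proof works verbatim even if $(1-\lambda)K_f+\lambda K_g$ fails to be Borel). The statement for $s\in\N$ is therefore essentially a one-line consequence of BM applied in the higher-dimensional ambient space $\R^{n+s}$, which is precisely why this approach is also well-suited to transfer the quantitative stability result of Figalli--Jerison (Proposition \ref{FJ}) from sets to functions in the subsequent proof of Theorem \ref{stab1}.
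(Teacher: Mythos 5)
Your argument is correct and follows the same Klartag-style reduction the paper uses: pass to the cone sets $K_f,K_g,K_h\subseteq\R^{n+s}$ from \eqref{solido}, apply Brunn--Minkowski in $\R^{n+s}$, and translate back via the volume identity \eqref{volK}; the only cosmetic difference is that the paper factors the inclusion through the supremal convolution $h_\lambda$ via \eqref{utile} and \eqref{inclusion} (objects it reuses in the stability proof), whereas you prove $(1-\lambda)K_f+\lambda K_g\subseteq K_h$ directly. One small remark: when $f(x_0)=g(x_1)=0$ (possible on the boundaries of the supports) your deduction ``$h(z)>0$'' is not justified, but such points force $y_0=y_1=w=0$ and hence lie in the null slice $\{w=0\}\subset\R^{n+s}$, so the Brunn--Minkowski step is unaffected and this is not a genuine gap.
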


\vspace{0.2 cm}
\begin{proof}
Since the integrals of $f$ and $g$ are positive, the sets $K_f$ and $K_g$ 
have positive measure. Let $\Omega_\lambda$ 
be the Minkowski convex combination (with coefficient $\lambda$)
of $\Omega_0=\supp(f)$ and $\Omega_1=\supp(g)$.
Now 
consider the function $h_{s,\lambda}$ as defined by \eqref{hdef}; to simplify the notation, we will denote $h_{s,\lambda}$ 
by $h_\lambda$ from now on.
First notice that the support of $h_\lambda$ is $\Omega_\lambda$. Then
it is easily seen that
\begin{equation}\label{utile}
K_{h_\lambda} = (1-\lambda)K_f + \lambda K_g\,.
\end{equation}
Moreover, since $h\geq h_\lambda$ by assumption \eqref{ipotBBL}, we have
\begin{equation}\label{inclusion}
K_h\supseteq K_{h_\lambda} \,.
\end{equation}
By applying Proposition \ref{BM} to $K_{h_\lambda}, K_f, K_g$ we get
\begin{eqnarray}\label{K} \abs{K_h}^{\frac{1}{n+s}}\geq\abs{K_{h_\lambda}}^{\frac{1}{n+s}} \geq  (1-\lambda)\abs{K_f}^{\frac{1}{n+s}} + \lambda \abs{K_g}^{\frac{1}{n+s}},
\end{eqnarray}
where $\abs{K_{h_\lambda}}$ possibly means the outer measure of the set $K_{h_\lambda}$.\\
Finally \eqref{volK} yields
\[\abs{K_h} 
= \omega_s \int_{\R^n} h\ dx, \qquad
\abs{K_f} =  \omega_s \int_{\R^n} f\ dx , \qquad  
\abs{K_g} =  \omega_s \int_{\R^n} g\ dx ,
\]
thus dividing \eqref{K} by $\omega_s^{\frac{1}{n+s}}$ we get \eqref{tesiBBL}.
\end{proof}

\vspace{0.5 cm}
Next we show how it is possible to generalize Proposition \ref{BBL}
to a positive rational index $s$. The idea is to apply again the
Brunn-Minkowski inequality to sets that generalize 
those of the type \eqref{solido}.
What follows is a slight variant of the proof of Theorem 2.1 in \cite{Kla}.

The case of a positive rational index $s$ requires the following definition.
Given $f: \R^n \longrightarrow [0,+\infty)$ integrable and a positive integer $q$ 
(it will be the denominator of the rational $s$)
we consider the auxiliary function $\tilde{f}: \R^{nq} \longrightarrow [0,+\infty)$
defined as
\begin{eqnarray}
\tilde{f}(x)= \tilde{f}(x_1,...,x_q)= \prod_{j=1}^q f(x_j), \label{fprod}
\end{eqnarray}
where $x= (x_1,...,x_q) \in \pa{\R^n}^q$. We observe that, by construction, 
\begin{eqnarray}
\int_{\R^{nq}} \tilde{f} \ dx =  \pa{\int_{\R^n} f \ dx}^q;    \label{intprod}
\end{eqnarray}
moreover $\quad \supp{\tilde{f}}= \pa{\supp{f}}\times ... \times \pa{\supp{f}}= \pa{\supp{f}}^{q}.$\\ \\
As just done, from now on we write $A^q$ to indicate the Cartesian product
of q copies of a set $A$. 
\begin{oss}\label{osscartesiano} 
Let $A,B$ be nonempty sets, $q>0$ be an integer, $\mu$  a real.
Clearly
\[ \pa{A + B}^q =  A^q + B^q, \qquad  \qquad   \pa{\mu A}^q = \mu A^q.\]
\end{oss}
\vspace{0.2 cm}
To compare products of real numbers of the type \eqref{fprod}
the following lemma is useful. It's a consequence
of H\"older's inequality (see \cite{HLP}, Theorem 10)
for families of real numbers (in our case for two sets of q positive numbers).

\begin{lemma} \label{Holder}
Given an integer $q>0$, let
$\gra{a_1,...,a_q},\ \gra{b_1,...,b_q}$ be two sets of $q$ real numbers.
Then  \[ \abs{\prod_{j=1}^q a_j} + \abs{\prod_{j=1}^q b_j}
\leq  \qua{\prod_{j=1}^q \pa{\abs{a_j}^q + \abs{b_j}^q}}^{1/q}.\]
\end{lemma}
From this lemma we deduce the following.
\begin{cor}\label{corHolder} Let $\lambda \in (0,1), \ s = \frac{p}{q}$ with integers $p,q>0$.\\
Given $f,g: \R^n \longrightarrow [0,+\infty), \ x_1,...,x_q,x'_1,...,x'_q \in \R^n,$ it holds 
\[ (1-\lambda) \prod_{j=1}^q f(x_j)^{1/p} + \lambda \prod_{j=1}^q g(x'_j)^{1/p}
\leq \prod_{j=1}^q \qua{(1-\lambda)f(x_j)^{1/s} + \lambda g(x'_j)^{1/s}}^{1/q}.\]
\end{cor}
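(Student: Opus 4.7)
The plan is to obtain the corollary as a direct specialization of Lemma \ref{Holder}, by choosing the two families of numbers $\{a_j\}$ and $\{b_j\}$ so that both sides of the lemma match the two sides of the claim. The cosmetic mismatch is that Lemma \ref{Holder} has the two products on the left with no weights, whereas the corollary carries weights $(1-\lambda)$ and $\lambda$; I absorb these weights into the $a_j$ and $b_j$ by distributing them as $q$-th roots across the $q$ factors. The exponent bookkeeping will then work thanks to the identity $q/p = 1/s$.

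Concretely, for $j=1,\dots,q$, I would set
\[
a_j = (1-\lambda)^{1/q}\, f(x_j)^{1/p}, \qquad b_j = \lambda^{1/q}\, g(x'_j)^{1/p}.
\]
Since all quantities are nonnegative, the absolute values in Lemma \ref{Holder} are inessential. With this choice
\[
\prod_{j=1}^q a_j = (1-\lambda)\prod_{j=1}^q f(x_j)^{1/p}, \qquad \prod_{j=1}^q b_j = \lambda \prod_{j=1}^q g(x'_j)^{1/p},
\]
which reproduces the left-hand side of the corollary verbatim. For the factors entering the right-hand side of Lemma \ref{Holder}, using $q/p = 1/s$, I compute
\[
a_j^q + b_j^q = (1-\lambda)\, f(x_j)^{q/p} + \lambda\, g(x'_j)^{q/p} = (1-\lambda)\, f(x_j)^{1/s} + \lambda\, g(x'_j)^{1/s}.
\]

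Plugging these into Lemma \ref{Holder} yields
\[
(1-\lambda)\prod_{j=1}^q f(x_j)^{1/p} + \lambda \prod_{j=1}^q g(x'_j)^{1/p} \ \leq\ \qua{\prod_{j=1}^q \pa{(1-\lambda)\,f(x_j)^{1/s} + \lambda\, g(x'_j)^{1/s}}}^{1/q},
\]
and since all factors are nonnegative the $1/q$-power distributes across the finite product, giving exactly the claimed upper bound $\prod_{j=1}^q \qua{(1-\lambda)f(x_j)^{1/s} + \lambda g(x'_j)^{1/s}}^{1/q}$. There is essentially no obstacle here: the statement is a repackaging of Lemma \ref{Holder}, and the only non-routine step is recognising that the weights $(1-\lambda)$ and $\lambda$ must be split as $q$-th powers among the $q$ factors so that raising to the $q$-th power produces $(1-\lambda)$ and $\lambda$ again while converting the exponent $1/p$ into $1/s$.
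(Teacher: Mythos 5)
Your proof is correct and matches the paper's own argument: the same choices $a_j = (1-\lambda)^{1/q}f(x_j)^{1/p}$ and $b_j = \lambda^{1/q}g(x'_j)^{1/p}$ are used, with the weights split as $q$-th roots so that Lemma \ref{Holder} applies directly. The only difference is that you also spell out the computation $a_j^q+b_j^q=(1-\lambda)f(x_j)^{1/s}+\lambda g(x'_j)^{1/s}$ via $q/p=1/s$, which the paper leaves implicit.
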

\begin{proof} 
Observing that
\[ (1-\lambda) \prod_{j=1}^q f(x_j)^{1/p} + \lambda \prod_{j=1}^q g(x'_j)^{1/p}=
\prod_{j=1}^q  (1-\lambda)^{1/q} f(x_j)^{1/p} +  \prod_{j=1}^q \lambda^{1/q}  g(x'_j)^{1/p},\]
the result follows directly from Lemma \ref{Holder} 
applied to 
$\gra{a_1,...,a_q},\ \gra{b_1,...,b_q}$
with  \[  a_j = (1-\lambda)^{1/q} f(x_j)^{1/p}, \qquad
 b_j = \lambda^{1/q} g(x'_j)^{1/p},  \qquad j=1,...,q .\]
\end{proof}

Let $$s = \frac{p}{q}$$ with integers $p,q>0$ that we can assume are coprime.\\
Given an integrable function $f: \R^n \longrightarrow [0,+\infty)$ not identically zero,
we define the nonempty measurable subset of $\R^{nq+p}$  
\begin{equation}
W_{f,s}=K_{\tilde{f},p} = \gra{(x,y)\in \pa{\R^{n}}^{q} \times \R^p: \ 
 x \in \supp(\tilde{f}),\ 
 \abs{y}\leq \tilde{f}(x)^{1/p}}= \label{solidogen}
\end{equation}
\[ \gra{(x_1,...,x_q,y)\in \pa{\R^{n}}^{q} \times \R^p: \ 
 x_j \in \supp(f) \ \forall\ j=1,...,q,\quad    \abs{y}\leq \prod_{j=1}^q f(x_j)^{1/p}}.\]
We notice that this definition naturally generalizes \eqref{solido},
since in the case of an integer $s>0$ it holds $s=p,\ q=1$, so
in this case $\tilde{f}=f$ and $W_{f,s} =K_f.$

As for $K_{f,s}$, for simplicity we will remove systematically the subindex $s$ and write $W_f$ in place of $W_{f,s}$ if there is no possibility of confusion.
Clearly \begin{equation}
\abs{W_f} = \int_{\supp\pa{\tilde{f}}} \omega_p \cdot \pa{\tilde{f}(x)^{1/p}}^p\ dx = 
 \omega_p \int_{\R^{nq}} \tilde{f}(x)\ dx = \omega_p \pa{\int_{\R^n} f(x)\ dx}^q
  \label{volW}
\end{equation}
where 
the last equality is given by \eqref{intprod}.\\
Moreover we see that $W_f$ is convex if and only if $\tilde{f}$ is $\frac{1}{p}$-concave (that is, if and only if $f$ is $\frac1s$-concave, see Lemma \ref{eredconc} later on). Next we set
\begin{eqnarray}
W= (1-\lambda)W_f + \lambda W_g  \label{W}\,.
\end{eqnarray} 
Finally, we notice that, by \eqref{utile}, we have
$$
W=K_{\tilde{h}_{p,\lambda},p}\,,
$$
where $\tilde{h}_{p,\lambda}$ is the $(1/p,\lambda)$-supremal convolution of $\tilde{f}$ and $\tilde{g}$ as defined in \eqref{hdef}.
In other words,
$W$ is the set made by 
the elements  $(z,y)\in \pa{\R^{n}}^{q} \times \R^p$ such that $z \in (1-\lambda)\supp(\tilde{f})+ \lambda\supp(\tilde{g})$ and
\begin{equation}\begin{array}{rl}
 \abs{y}\leq    &\sup\big\{(1-\lambda) \tilde{f}(x)^{1/p} + \lambda \tilde{g}(x')^{1/p}:\\
&\qquad\qquad z=(1-\lambda)x +\lambda x', 
x \in \supp(\tilde{f}), x'\in \supp(\tilde{g})\big\}.\end{array} \label{tildeinv}
\end{equation}

\begin{lemma}\label{incluW}
With the notations introduced above, it holds
\[ W \subseteq  W_{{h}_\lambda}\subseteq W_h\,,\]
where ${{h}_\lambda}$ is the $(1/s,\lambda)$-supremal convolution of ${f}$,${g}$,
and $h$ is as in Proposition \ref{bbleq}.
\end{lemma}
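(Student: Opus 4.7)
The plan is to split the claim into two inclusions, with the second one being nearly immediate and the first one being the substantive step that requires Corollary \ref{corHolder}.

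For the second inclusion $W_{h_\lambda} \subseteq W_h$, I would argue pointwise. Since $h \geq h_\lambda$ by the definition of $h_\lambda$ and the assumption on $h$ in Proposition \ref{bbleq}, the products satisfy $\tilde h(x) = \prod_{j=1}^q h(x_j) \geq \prod_{j=1}^q h_\lambda(x_j) = \widetilde{h_\lambda}(x)$ for every $x=(x_1,\dots,x_q)\in(\R^n)^q$. Hence $\supp(\widetilde{h_\lambda})\subseteq\supp(\tilde h)$ and the defining inequality $|y|\leq \widetilde{h_\lambda}(x)^{1/p}$ is stronger than $|y|\leq \tilde h(x)^{1/p}$, so $W_{h_\lambda}=K_{\widetilde{h_\lambda},p}\subseteq K_{\tilde h,p}=W_h$.

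For the first inclusion $W\subseteq W_{h_\lambda}$, I would pick an arbitrary $(z,y)\in W$ and write it as $(z,y)=(1-\lambda)(x,u)+\lambda(x',v)$ with $(x,u)\in W_f$ and $(x',v)\in W_g$. Then $z=(1-\lambda)x+\lambda x'\in(\R^n)^q$ with $x=(x_1,\dots,x_q)\in\supp(f)^q$ and $x'=(x'_1,\dots,x'_q)\in\supp(g)^q$, so writing $z_j=(1-\lambda)x_j+\lambda x'_j$ we get $z_j\in\supp(h_\lambda)$ componentwise, and therefore $z\in\supp(\widetilde{h_\lambda})$. Using $|y|\leq (1-\lambda)|u|+\lambda|v|\leq (1-\lambda)\tilde f(x)^{1/p}+\lambda \tilde g(x')^{1/p}$ and the factorizations $\tilde f(x)^{1/p}=\prod_j f(x_j)^{1/p}$, $\tilde g(x')^{1/p}=\prod_j g(x'_j)^{1/p}$, Corollary \ref{corHolder} yields
\[
|y|\leq \prod_{j=1}^q\bigl[(1-\lambda)f(x_j)^{1/s}+\lambda g(x'_j)^{1/s}\bigr]^{1/q}.
\]
Since $h_\lambda(z_j)\geq[(1-\lambda)f(x_j)^{1/s}+\lambda g(x'_j)^{1/s}]^s$ by definition of $h_\lambda$, raising to the power $1/q=s/p$ and taking the product over $j$ gives $\widetilde{h_\lambda}(z)^{1/p}=\prod_j h_\lambda(z_j)^{1/p}\geq \prod_j[(1-\lambda)f(x_j)^{1/s}+\lambda g(x'_j)^{1/s}]^{1/q}$. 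Combining, $|y|\leq \widetilde{h_\lambda}(z)^{1/p}$, so $(z,y)\in K_{\widetilde{h_\lambda},p}=W_{h_\lambda}$.

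The main (only) obstacle is making the Hölder-type estimate in Corollary \ref{corHolder} line up correctly with the exponents $1/p$ on the Euclidean side and $1/s$ on the level-set side; everything else is bookkeeping about supports and componentwise convex combinations. I would be slightly careful to note that this also shows $\supp(\widetilde{h_\lambda})\supseteq(1-\lambda)\supp(\tilde f)+\lambda\supp(\tilde g)$, which is what allows one to apply the identity \eqref{utile}-style computation to conclude that the inclusion is in fact an equality up to the ``graph'' part, though for the present lemma the inclusion alone suffices.
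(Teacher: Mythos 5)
Your proposal is correct and follows essentially the same route as the paper: both inclusions are handled by the same observations, with Corollary \ref{corHolder} doing the real work for $W\subseteq W_{h_\lambda}$, and the second inclusion reduced to $h\geq h_\lambda$. The only cosmetic difference is that you argue elementwise from the Minkowski-sum definition of $W$ while the paper passes through the supremum characterization \eqref{tildeinv}; and a small wording slip: one raises the pointwise bound $h_\lambda(z_j)\geq[\cdots]^s$ to the power $1/p$ (producing the exponent $s/p=1/q$ on the right), not ``to the power $1/q$.''
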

\begin{proof} 
The second inclusion is obvious, since $h\geq h_\lambda$ by assumption \eqref{assumptionh}.
Regarding the other inclusion, first we notice that \eqref{solidogen} and Remark \ref{osscartesiano} yield
\[  W_{{h}_\lambda}= \gra{(z,y)\in \pa{\R^{n}}^{q} \times \R^p: \ 
 z \in \supp(\tilde{h_\lambda}),\    
 \abs{y}\leq \tilde{h_\lambda}(z)^{1/p}}= \]
 \[ = \gra{(z,y)\in \pa{\R^{n}}^{q} \times \R^p: \ 
 z \in \pa{(1-\lambda)\supp(f)+ \lambda\supp(g)}^q,\    
 \abs{y}\leq \tilde{h_\lambda}(z)^{1/p}}= \]
 \[ = \gra{(z,y)\in \pa{\R^{n}}^{q} \times \R^p: \ 
 z \in (1-\lambda)\supp(\tilde{f})+ \lambda\supp(\tilde{g}),\    
 \abs{y}\leq \tilde{h_\lambda}(z)^{1/p}}\,, \]
 where $\tilde{h_\lambda}$ is the function associated to ${h_\lambda}$ by \eqref{fprod}.
To conclude it is sufficient to compare this with the condition given by \eqref{tildeinv}. \\ 
For every $z\in(1-\lambda)\supp(\tilde{f})+ \lambda\supp(\tilde{g})$ consider
\[\sup\gra{(1-\lambda) \tilde{f}(x)^{1/p} + \lambda \tilde{g}(x')^{1/p}}
= \sup\gra{(1-\lambda) \prod_{j=1}^q f(x_j)^{1/p} + \lambda \prod_{j=1}^q g(x'_j)^{1/p}}\,,\]
where the supremum is made with respect to $x \in \supp(\tilde{f}),\ x'\in \supp(\tilde{g})$ such that $z=(1-\lambda)x +\lambda x'$.
Corollary \ref{corHolder} then implies
\[ \sup\gra{(1-\lambda) \tilde{f}(x)^{1/p} + \lambda \tilde{g}(x')^{1/p}} 
\leq \sup\gra{\prod_{j=1}^q \qua{(1-\lambda)f(x_j)^{1/s} + \lambda g(x'_j)^{1/s}}^{1/q}}\leq \]
\[ \leq \prod_{j=1}^q \gra{\sup{\qua{(1-\lambda)f(x_j)^{1/s} + \lambda g(x'_j)^{1/s}}^{1/q}}}=
\prod_{j=1}^q  \gra{h_\lambda\pa{(1-\lambda)x_j +\lambda x'_j}^{1/qs}}= \]
\[ = \tilde{h_\lambda}\pa{(1-\lambda)x +\lambda x'}^{1/p} =
\tilde{h_\lambda}(z)^{1/p},\]
having used the definition \eqref{fprod} in the penultimate equality.
Therefore if $$\abs{y}\leq    \sup\gra{(1-\lambda) \tilde{f}(x)^{1/p} + \lambda \tilde{g}(x')^{1/p}}\,,$$ that is if $(z,y) \in W$ by \eqref{tildeinv}, 
then $$\abs{y}\leq \tilde{h_\lambda}(z)^{1/p}\,,$$ i.e. $(z,y) \in W_{{h}_\lambda}$. This concludes the proof.
\end{proof}
\vspace{0.2 cm}
We are ready to prove 
the following version of the Borell-Brascamp-Lieb inequality,
which holds for any positive real index $s$ (and in fact also for $s=0$).

\begin{prop}[BBL for $p>0$]\label{BBL2}
Let $s>0,\ \lambda \in (0,1)$, let $n>0$ be integer. 
Given $f,g,h :\R^n \longrightarrow [0,+\infty)$ 
integrable such that $\int f>0$ and $\int g>0$,
assume that for any $x_0\in\supp(f),\ x_1 \in\supp(g)$
\begin{eqnarray}
h\pa{(1-\lambda)x_0 + \lambda x_1} \geq \qua{(1-\lambda)f(x_0)^{1/s} + \lambda g(x_1)^{1/s}}^s.  \label{ipotBBL2}
\end{eqnarray}
Then
\begin{eqnarray}
 \pa{\int_{\R^n} h\ dx}^{\frac{1}{n+s}} \geq  
(1-\lambda) \pa{\int_{\R^n} f\ dx}^{\frac{1}{n+s}} +
\lambda \pa{\int_{\R^n} g\ dx}^{\frac{1}{n+s}}. \label{tesiBBL2} 
\end{eqnarray}
\end{prop}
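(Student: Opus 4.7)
The plan is to first prove the rational case $s=p/q$ with coprime positive integers $p,q$ by lifting to $\R^{nq+p}$ via the construction $W_{f,s}=K_{\tilde f,p}$, and then extend to arbitrary $s>0$ by a rational approximation argument based on the monotonicity of $p$-means.

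For the rational case, essentially all the machinery has been set up. Lemma \ref{incluW} provides the inclusion $W\subseteq W_h$, where $W=(1-\lambda)W_f+\lambda W_g$. Applying the Brunn-Minkowski inequality (Proposition \ref{BM}) in $\R^{nq+p}$ to $W_f$ and $W_g$, and using the inclusion just recalled, I would get
$$\abs{W_h}^{\frac{1}{nq+p}}\ \geq\ \abs{W}^{\frac{1}{nq+p}}\ \geq\ (1-\lambda)\abs{W_f}^{\frac{1}{nq+p}}+\lambda\abs{W_g}^{\frac{1}{nq+p}}.$$
Invoking the volume identity \eqref{volW}, which gives $\abs{W_f}=\omega_p\pa{\int_{\R^n} f\ dx}^q$ (and analogously for $g$ and $h$), substituting, and dividing by $\omega_p^{1/(nq+p)}$, yields
$$\pa{\int_{\R^n} h\ dx}^{\frac{q}{nq+p}}\ \geq\ (1-\lambda)\pa{\int_{\R^n} f\ dx}^{\frac{q}{nq+p}}+\lambda\pa{\int_{\R^n} g\ dx}^{\frac{q}{nq+p}}.$$
Since $q/(nq+p)=1/(n+s)$, this is exactly \eqref{tesiBBL2}.

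For irrational $s>0$ I would use monotonicity of the $p$-mean in $p$: because $\mathcal{M}_{1/s'}(a,b;\lambda)\leq \mathcal{M}_{1/s}(a,b;\lambda)$ whenever $s'\geq s$, assumption \eqref{ipotBBL2} is automatically inherited when $s$ is replaced by any larger value $s'$. Choose a sequence of rationals $s_k\searrow s$; applying the rational case just proved with each $s_k$ yields
$$\pa{\int_{\R^n} h\ dx}^{\frac{1}{n+s_k}}\ \geq\ (1-\lambda)\pa{\int_{\R^n} f\ dx}^{\frac{1}{n+s_k}}+\lambda\pa{\int_{\R^n} g\ dx}^{\frac{1}{n+s_k}}.$$
Letting $k\to\infty$ and invoking continuity of $t\mapsto A^{1/(n+t)}$ delivers \eqref{tesiBBL2} for $s$.

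The main technical obstacle has effectively already been discharged in Lemma \ref{incluW} (which rested on Corollary \ref{corHolder}); what remains in the main body of the proof is essentially bookkeeping with volumes and exponents, plus the routine rational approximation. The one delicate point worth double-checking is that the direction of the $p$-mean monotonicity is the one that preserves the hypothesis when $s$ grows, which is why it is natural to approximate $s$ from above rather than from below.
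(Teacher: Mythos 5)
Your proof of the rational case follows the paper's own argument step for step: Brunn--Minkowski (Proposition \ref{BM}) applied to $W_f,W_g$ in $\R^{nq+p}$, the inclusion from Lemma \ref{incluW}, the volume identity \eqref{volW}, and the exponent computation $q/(nq+p)=1/(n+s)$. For irrational $s$ the paper merely invokes ``a standard approximation argument''; you spell this out and correctly observe that one must approximate $s$ from above so that the monotonicity of $p$-means preserves hypothesis \eqref{ipotBBL2}.
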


\vspace{0.2 cm}

\begin{proof}
Assume first that $s>0$ is rational and let $s = \frac{p}{q}$ with $p,q$ coprime positive integers.
Thanks to \eqref{W} we can apply Proposition \ref{BM} to 
$W_{f},\,W_{g}$
(that are nonempty measurable subsets of $\R^{nq+p}$), so
\[ \abs{W}^{\frac{1}{nq+p}} \geq  (1-\lambda)\abs{W_f}^{\frac{1}{nq+p}} + \lambda \abs{W_g}^{\frac{1}{nq+p}},\]
where $\abs{W}$ possibly means the outer measure of the set $W$.
On the other hand Lemma \ref{incluW} implies $ \abs{W_h}  \geq \abs{W}$, thus
\[ \abs{W_h}^{\frac{1}{nq+p}} \geq  (1-\lambda)\abs{W_f}^{\frac{1}{nq+p}} + \lambda \abs{W_g}^{\frac{1}{nq+p}}.\]
Finally the latter inequality with the identity \eqref{volW} is equivalent to
\[ \omega_p ^{\frac{1}{nq+p}}  \pa{ \int_{\R^n} h\ dx}^\frac{q}{nq+p} \geq 
\omega_p ^{\frac{1}{nq+p}} \qua{(1-\lambda)\pa{ \int_{\R^n} f\ dx}^\frac{q}{nq+p}
+\lambda\pa{ \int_{\R^n} g\ dx}^\frac{q}{nq+p}}. \]
Dividing by $\omega_p ^{\frac{1}{nq+p}}$ we get \eqref{tesiBBL2},
since \[\frac{q}{nq+p} = \frac{q}{q(n+s)}= \frac{1}{n+s}\] 
is exactly the required index. 
The case of a real $s>0$ (and also $s=0$) follows by a standard approximation argument.
\end{proof}
\vspace{0.2 cm}

\section{The proof of Theorem \ref{stab1}}
The idea is to apply the result of Figalli-Jerison, more precisely Corollary \ref{corFJ}, to the sets $K_{h_\lambda}, K_f, K_g$, and then translate the result in terms of the involved functions. We remember that with $h_\lambda$ we denote the function $h_{s,\lambda}$ given by \eqref{hdef}. We also recall that we set $F=\int f$ and $G=\int g$.
Thanks to \eqref{volK}, assumption \eqref{ipstab} is equivalent to
\[\omega_s^{-1}   \abs{K_{h}} \leq  \omega_s^{-1} 
\qua{(1-\lambda)\abs{K_f}^{\frac{1}{n+s}}  +\lambda\abs{K_g}^{\frac{1}{n+s}}}^{n+s}+ \epsilon\,, \qquad\]
which, by \eqref{inclusion}, implies
\begin{eqnarray}
 \abs{K_{h_\lambda}} \leq  
\qua{(1-\lambda)\abs{K_f}^{\frac{1}{n+s}}  +\lambda\abs{K_g}^{\frac{1}{n+s}}}^{n+s}+ \epsilon \omega_s. \label{pertuK}
\end{eqnarray}


If $\epsilon$ is small enough, by virtue of \eqref{utile} we can apply  Corollary \ref{corFJ} to the sets $K_{h_\lambda}, K_f, K_g$ and from \eqref{pertuK} 
we obtain that they satisfy assumption
\eqref{ipotcorFJ} with
\begin{eqnarray}
\delta= \frac{\epsilon \omega_s}{\mathcal{M}_{\frac{1}{n+s}}(\abs{K_f},\abs{K_g};\lambda)} =  
\frac{\epsilon}{\mathcal{M}_{\frac{1}{n+s}}(F,G;\lambda)}.\label{delta}
\end{eqnarray}

Then, if $\delta \leq e^{-M_{n+s}(\tau)}$, there exist a convex $K\subset \R^{n+s}$ and two homothetic copies $\hat{K}_f$ and  $\hat{K}_g$ of $K_f$ and $K_g$ such that  
\begin{equation}\label{utile2}
|\hat{K}_f|=|\hat{K}_g|=1, \qquad \pa{\hat{K}_f\cup\hat{K}_g} \subseteq K,
\end{equation}
and
\begin{equation}
   \abs{K\setminus \hat{K}_f} + \abs{K\setminus\hat{K}_g} \leq \tau^{-N_{n+s}} 
\pa{ \frac{\epsilon}{\mathcal{M}_{\frac{1}{n+s}}(F,G;\lambda)}}^{\sigma_{n+s}(\tau)}.\label{KFJdim} 
\end{equation}

\begin{oss}\label{rem1}
Since 
$|\hat{K}_f|=|\hat{K}_g|=1$,
\eqref{KFJdim} implies that the convex set $K$ has finite positive measure. Then it is bounded (since convex), whence 
\eqref{utile2} yields the boundedness of $K_f$ and $K_g$ which in turn implies the boundedness of the functions $f$ and $g$.
For simplicity, we can assume the convex $K$ is compact (possibly substituting it with its closure).
\end{oss}

In what follows, we indicate with $(x,y)\in \R^n\times \R^s$ an element of $\R^{n+s}$.
When we say (see just before \eqref{utile2}) that $\hat{K}_f$ and $\hat{K}_g$ are homothetic copies of $K_f$ and $K_g$, we mean 
that there exist $z_0=(x_0,y_0)\in\R^{n+s}$ and $z_1=(x_1,y_1)\in\R^{n+s}$
such that
\begin{equation}\label{hat}
\hat{K}_f=|K_f|^{-\frac{1}{n+s}}\left(K_f+z_0\right)\quad\text{and}\quad \hat{K}_g=|K_g|^{-\frac{1}{n+s}}\left(K_g+z_1\right)\,.
\end{equation}  Clearly, without loss of generality we can take $z_0=0$. 

To conclude the proof, we want now to show that, up to a suitable symmetrization, we can take $y_1=0$ (i.e. the translation of the homothetic copy $\hat{K}_g$ of $K_g$ is horizontal) and that the convex set $K$ given by Figalli and Jerison can be taken of the type $K_u$ 
for some $\frac{1}{s}$-concave function $u$.

For this, let us introduce the following Steiner type symmetrization in $\R^{n+s}$ with respect to the $n$-dimensional hyperspace
$\gra{y=0}$ (see for instance \cite{BuZa}). 
Let $C$ be a bounded measurable set in $\R^{n+s}$, for every $\bar{x}\in\R^n$ we set
$$
C(\bar{x})=C \cap \gra{x=\bx}=\{y\in\R^s\,:\, (\bar{x},y)\in C\}
$$
and 
\begin{equation}\label{maxraggio}
r_C(\bar{x})=\left(\omega_s^{-1}|C(\bar{x})|\right)^{1/s}\,.
\end{equation}
Then we define the $S$-symmetrand of $C$ as follows
\begin{eqnarray}
S(C)= \gra{\pa{\bar{x},y}\in \R^{n+s}:\ C\cap \gra{x=\bar{x}} \neq \emptyset,\ \abs{y}\leq r_C(\bar{x})}.  \label{simmesferica}
\end{eqnarray}
We notice that $S(C)$ is obtained
as union of the $s$-dimensional
closed balls of center $(\bx,0)$ and radius $r_C(\bx)$, for $\bx\in \R^n$ such that $C\cap \gra{x=\bx}$ is nonempty.
Thus, fixed $\bx$, the measure of the corresponding section of $S(C)$ is
\begin{equation}\label{stessamisura}
|S(C)\cap\gra{x=\bx}|=\omega_s r_C(\bx)^s  = |C(\bx)|\,.
\end{equation}

We describe the main properties of $S$-symmetrization, for 
bounded measurable susbsets of $\R^{n+s}$:\\
(i) if $C_1 \subseteq C_2$ then $S(C_1)\subseteq S(C_2)$ (obvious by definition);\\
(ii) $\abs{C}=\abs{S(C)}$ (consequence of \eqref{stessamisura} and Fubini's Theorem)
     so the S-symmetrization is measure preserving;\\
(iii) if $C$ is convex then $S(C)$ is convex (the proof is based
on the BM inequality in $\R^s$ and, for the sake of completeness, is given in the Appendix).\\


Now we symmetrize $K,\hat{K}_f,\hat{K}_g$ (and then replace them with $S(K),S(\hat{K}_f),S(\hat{K}_g)$).
Clearly \begin{equation}\label{simmf}
S(\hat{K}_f)= \hat{K}_f,
\end{equation}
\begin{equation}\label{simmg}
S(\hat{K}_g)=S\pa{|K_g|^{-\frac{1}{n+s}}(K_g +(x_1,y_1))}= |K_g|^{-\frac{1}{n+s}}(K_g +(x_1,0))\,.
\end{equation}
Moreover, (iii) implies that $S(K)$ is convex
and by (i) and \eqref{utile2} we have 
\begin{equation}\label{containing}
(S(\hat{K}_f)\cup S(\hat{K}_g))\subseteq S(K)\,.
\end{equation}
The latter, \eqref{KFJdim} and (ii) imply
\begin{equation}\label{KprimoFJ} \abs{S(K)\setminus S(\hat{K}_f)} + \abs{S(K)\setminus S(\hat{K}_g)} \leq \tau^{-N_{n+s}} 
\pa{ \frac{\epsilon}{\mathcal{M}_{\frac{1}{n+s}}(F,G;\lambda)}}^{\sigma_{n+s}(\tau)}\,.\end{equation}
Finally we notice that $S(K)$ is a compact convex set
of the desired form.

\begin{oss}
Consider the set $K_u$ associated to a function $u:\rn\to[0,+\infty)$ by \eqref{solido} and 
let $\bar{x}\in\rn$, $\bar{z}=(\bar{x},0)\in \R^{n+s},\ \mu>0$ and  
$$H=\mu\left(K_{u}+\bar{z}\right)\,.$$
Then 
$$
H=K_{v}
$$
(the set associated to $v$ by \eqref{solido}) where
\begin{equation}\label{isometrie}
v(x)=\mu^su\left(\frac{x-\bar{x}}{\mu}\right)\,.
\end{equation}
\end{oss}

From the previous remarks, we see that the sets $S(\hat{K}_f)$ and $S(\hat{K}_g)$ are in fact associated via \eqref{solido} to two functions 
$\hat{f}$ and $\hat{g}$, such that
\begin{equation}\label{hatKhatf}
S(\hat{K}_f)=K_{\hat{f}}\,,\quad S(\hat{K}_g)=K_{\hat{g}}\,,
\end{equation} 
and $\hat{f}$ and $\hat{g}$ are $s$-equivalent to $f$ and $g$ respectively, in the sense of \eqref{s-equivalent}, with 
\begin{equation}\label{mufmug}
\mu_f= \pa{\omega_s F}^{\frac{-1}{n+s}},\qquad
\mu_g=\pa{\omega_s G}^{\frac{-1}{n+s}}\,.
\end{equation}

We notice that the support sets $\Omega_0$ and $\Omega_1$  of $\hat{f}$ and $\hat{g}$ are given by
$$\Omega_0=\{x\in\rn\,:(x,0)\in S(\hat{K}_f)\}\,,\qquad \Omega_1=\{x\in\rn\,:\,(x,0)\in S(\hat{K}_g)\}$$
and that they are in fact homothetic copies of the support sets of the original functions $f$ and $g$.

Now we want to find a $\frac{1}{s}$-concave function $u$ such that $S(K)$ is associated to $u$ via \eqref{solido}.
We define $u: \R^n \longrightarrow [0,+\infty)$ as follows
\[ u(x) = \begin{cases}
    r_K(x)^s & \text{if $x\in \R^n:\ (x,0)\in S(K),$}\\
    0 & \text{otherwise}\,,
\end{cases}\]
and prove that
\begin{equation}\label{Ku} K_u = S(K)\,.\end{equation}

First notice that
\begin{eqnarray}
 \supp(u) =  \gra{x\in \R^n:\ (x,0)\in S(K)}. \label{suppinclu}\,
\end{eqnarray}
Indeed we have $\gra{z\in \R^n:\ u(z)>0} \subseteq \gra{x\in \R^n:\ (x,0)\in S(K)}$, 
whence $\supp(u)= \overline{\gra{z\in \R^n:\ u(z)>0}} \subseteq \gra{x\in \R^n:\ (x,0)\in S(K)}$, 
since the latter is closed.
Vice versa let $x$ such that $(x,0)\in S(K)$.
If $r_K(x)>0$ (see \eqref{maxraggio}) then $x \in \supp(u)$ obviously. 
Otherwise suppose $r_K(x)=0$, then, by the convexity of $S(K)$ and the fact that $S(K)$ is not contained in $\{y=0\}$, 
evidently 
\[\qua{\pa{U\setminus \gra{x}} \cap  \gra{z\in \R^n:\ r_K(z)>0}} \neq \emptyset\]
for every neighborhood $U$ of $x$,
i.e. $x \in \supp(u).$\\

By the definition of $u$ and \eqref{solido}, using \eqref{suppinclu}, 
we get
\[  K_u = \gra{(x,y)\in \R^n \times \R^s: \ x \in \supp(u),\ \abs{y}\leq u(x)^{1/s}}=\]
\[ = \gra{(x,y)\in \R^n \times \R^s: \ (x,0)\in S(K),\ \abs{y}\leq u(x)^{1/s}}=\]
\[ = \gra{(x,y)\in \R^n \times \R^s: \ (x,0)\in S(K),\ \abs{y}\leq r_K(x)}=S(K)\,.\]
Therefore we have shown
\eqref{Ku}
and from the convexity of $K$ follows that
$u$ is a $\frac{1}{s}$-concave function.
Being $K_u \supseteq \pa{K_{\hat{f}} \cup K_{\hat{g}}}$, clearly
\[ \supp(u)\supseteq \pa{\Omega_0 \cup \Omega_1}, \qquad  u \geq\hat{f} 
\ \text{in}\ \Omega_0, \qquad  \ u \geq \hat{g} \ \text{in}\ \Omega_1\,.\]
The final estimate can be deduced from \eqref{KprimoFJ}. Indeed, thanks to \eqref{volK}, we get
\[ \abs{K_u\setminus K_{\hat{f}}}= \abs{K_u}- \abs{K_{\hat{f}}}= 
\omega_s \int_{\R^n} (u-\hat{f}) \ dx,\]
and the same equality holds for $\abs{K_u\setminus K_{\hat{g}}}$. So \eqref{KprimoFJ} becomes 
\[ \int_{\R^n} (u-\hat{f}) \ dx \ + \ \int_{\R^n} \pa{u-\hat{g}} \ dx  \  \leq 
\omega_s^{-1}   \tau^{-N_{n+s}} 
\pa{ \frac{\epsilon}{\mathcal{M}_{\frac{1}{n+s}}(F,G;\lambda)}}^{\sigma_{n+s}(\tau)},\]
that is the desired result. 
\vspace{0.4 cm}

\section{A generalization to the case $s$ positive rational} 
We explain how Theorem \ref{stab1} can be generalized to a positive rational index $s$.
Given $f: \R^n \longrightarrow [0,+\infty)$ and an integer $q>0$,
we consider the auxiliary function $\tilde{f}: \R^{nq} \longrightarrow [0,+\infty)$
given by \eqref{fprod}, i.e.
\[\tilde{f}(x)= \tilde{f}(x_1,...,x_q)= \prod_{j=1}^q f(x_j),\] 
with $x= (x_1,...,x_q) \in \pa{\R^n}^q$.
Clearly $f$ is bounded if and only if $\tilde{f}$ is bounded.
We study further properties 
of functions of type \eqref{fprod}.

\begin{lemma}\label{eredconc}
Given an integer $q>0$, and a real $t>0$ 
let $\tilde{u}: \R^{nq} \longrightarrow [0,+\infty)$ 
be a function of the type \eqref{fprod}. Then $\tilde{u}$ is $t$-concave 
if and only if the 
function $u: \R^n \longrightarrow [0,+\infty)$ is $(qt)$-concave.
\end{lemma}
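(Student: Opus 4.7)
The plan is to unwind the definition of $p$-concavity and reduce everything to a coordinate-wise application of the weighted Hölder-type inequality of Lemma \ref{Holder}. First I would take care of the supports: by construction $\supp(\tilde u) = (\supp u)^q$, and a Cartesian power $A^q \subseteq \R^{nq}$ is convex if and only if $A$ itself is convex. Thus the convex-support requirement for $t$-concavity of $\tilde u$ and for $(qt)$-concavity of $u$ are equivalent, and it remains only to compare the pointwise concavity inequalities.

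For the forward direction, I would assume $u$ is $(qt)$-concave and take $x=(x_1,\dots,x_q)$, $y=(y_1,\dots,y_q)\in\R^{nq}$ and $\lambda\in(0,1)$. Applying the $(qt)$-concavity of $u$ in each component $j$ and then multiplying the resulting inequalities (after raising to the power $1/q$) gives
\[
\tilde u\bigl((1-\lambda)x+\lambda y\bigr)^t = \prod_{j=1}^q u\bigl((1-\lambda)x_j+\lambda y_j\bigr)^t \geq \prod_{j=1}^q \bigl[(1-\lambda)u(x_j)^{qt}+\lambda u(y_j)^{qt}\bigr]^{1/q}.
\]
Then I would invoke the weighted form of Lemma \ref{Holder} with $a_j:=u(x_j)^t$ and $b_j:=u(y_j)^t$ (equivalently, a specialization of Corollary \ref{corHolder}) to bound the right-hand side from below by $(1-\lambda)\prod_j a_j+\lambda\prod_j b_j=(1-\lambda)\tilde u(x)^t+\lambda\tilde u(y)^t$, which is the definition of $t$-concavity of $\tilde u$.

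For the reverse direction, assume $\tilde u$ is $t$-concave. The natural reduction is to evaluate $\tilde u$ on the diagonal: for $x,y\in\R^n$, set $\bar x:=(x,\dots,x)$ and $\bar y:=(y,\dots,y)\in\R^{nq}$. Then $\tilde u(\bar x)=u(x)^q$, $\tilde u(\bar y)=u(y)^q$, and the convex combination $(1-\lambda)\bar x+\lambda\bar y$ is itself the diagonal lift of $(1-\lambda)x+\lambda y$. Hence the $t$-concavity inequality for $\tilde u$ evaluated at $(\bar x,\bar y)$ reads
\[
u\bigl((1-\lambda)x+\lambda y\bigr)^{qt} \geq (1-\lambda)u(x)^{qt}+\lambda u(y)^{qt},
\]
which is exactly the $(qt)$-concavity of $u$.

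The only non-routine ingredient is the Hölder-type step in the forward direction, and this is already supplied by Lemma \ref{Holder}; the rest is simply unwinding definitions. If any of the values $u(x_j)$ or $u(y_j)$ vanishes, both sides reduce to trivial inequalities, so the argument covers the full range of nonnegative values without additional care.
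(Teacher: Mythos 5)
Your proof is correct and follows essentially the same route as the paper: the direction ``$\tilde u$ $t$-concave $\Rightarrow$ $u$ $(qt)$-concave'' is the same diagonal-lift trick (evaluating $\tilde u$ at $(x,\dots,x)$ and $(y,\dots,y)$), and the converse is the same component-wise application of $(qt)$-concavity followed by Lemma \ref{Holder} (equivalently Corollary \ref{corHolder}) with $a_j=(1-\lambda)^{1/q}u(x_j)^t$, $b_j=\lambda^{1/q}u(y_j)^t$. Your additional remarks about the convexity of $(\supp u)^q$ and the trivial vanishing cases are consistent with the paper's global definition of $p$-concavity and are harmless extras rather than a genuine departure.
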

\begin{proof}
Suppose first that $\tilde{u}^t$ is concave. Fixed $\lambda\in (0,1),\ x,x'\in \R^n$,
we consider the element of $\R^{nq}$ which has
all the $q$ components identical to $(1-\lambda)x+\lambda x'$.
From hypothesis it holds
\[ \tilde{u}^{t}\pa{(1-\lambda)x+\lambda x',...,(1-\lambda)x+\lambda x'} \geq (1-\lambda)\tilde{u}^{t}\pa{x,...,x} 
+ \lambda \tilde{u}^{t}\pa{x',...,x'},\]
i.e. (thanks to \eqref{fprod})
\[ u^{qt}\pa{(1-\lambda) x+\lambda x'} \geq (1-\lambda)u^{qt}(x) + \lambda u^{qt}(x').\]
Thus $u^{qt}$ is concave.\\
Vice versa assume that $u^{qt}$ is concave, and fix 
$\lambda\in (0,1),\ x=(x_1,...,x_q),\ x'=\pa{x'_1,...,x'_q}  \in \pa{\R^n}^q$. 
We have
\[\tilde{u}^{t}\pa{(1-\lambda) x+\lambda x'} = \prod_{j=1}^q u^t\pa{(1-\lambda)x_j + \lambda x'_j}
= \prod_{j=1}^q \qua{u^{qt}\pa{(1-\lambda)x_j + \lambda x'_j}}^{1/q} \geq\]
\[ \geq \prod_{j=1}^q  \qua{(1-\lambda) u^{qt}(x_j) + \lambda u^{qt}(x'_j)}^{1/q} \geq 
\prod_{j=1}^q  (1-\lambda)^{1/q} u^{t}(x_j) +  \prod_{j=1}^q  \lambda^{1/q}  u^{t}(x'_j)=  \]
\[=   (1-\lambda) \prod_{j=1}^q  u^{t}(x_j) + \lambda \prod_{j=1}^q  u^{t}(x'_j)
= (1-\lambda)\tilde{u}^{t}(x) + \lambda \tilde{u}^{t}(x'),  \]
where the first inequality holds by concavity of $u^{qt}$, while
in the second one we have used Lemma \ref{Holder} with
$a_j = (1-\lambda)^{1/q} u^{t}(x_j), \ b_j=\lambda^{1/q}  u^{t}(x'_j)$.
Hence $u^t$ is concave.

\end{proof}
\begin{lemma}\label{maggprod}
Let $q>0$ integer and 
 $u \geq f\geq 0$ in $\rn$. 
Then \[\tilde{u}- \tilde{f} \ \geq  \ \widetilde{u-f}.\]
\end{lemma}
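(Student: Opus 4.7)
Fix $x=(x_1,\dots,x_q)\in(\R^n)^q$ and set $a_j=u(x_j)$, $b_j=f(x_j)$. By hypothesis $a_j\ge b_j\ge 0$ for every $j$, so after unwinding the definition \eqref{fprod} the claim $\tilde u(x)-\tilde f(x)\ge\widetilde{u-f}(x)$ becomes the purely numerical inequality
\[
\prod_{j=1}^q a_j \;-\; \prod_{j=1}^q b_j \;\ge\; \prod_{j=1}^q (a_j-b_j),
\]
valid whenever $a_j\ge b_j\ge 0$. This reduces the lemma to an elementary fact about nonnegative reals.

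\textbf{First approach (expansion).} Writing $a_j=b_j+c_j$ with $c_j=a_j-b_j\ge 0$ and expanding the product,
\[
\prod_{j=1}^q a_j=\prod_{j=1}^q (b_j+c_j)=\sum_{S\subseteq\{1,\dots,q\}}\Bigl(\prod_{j\in S}c_j\Bigr)\Bigl(\prod_{j\notin S}b_j\Bigr).
\]
Subtracting the $S=\emptyset$ term gives $\prod a_j-\prod b_j$ as a sum over nonempty $S$ of nonnegative quantities; one of these (namely $S=\{1,\dots,q\}$) equals $\prod c_j=\prod(a_j-b_j)$, and all others are $\ge 0$. The inequality follows.

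\textbf{Alternative (induction on $q$).} The case $q=1$ is trivial. Assuming the inequality for $q-1$, set $P=\prod_{j<q}a_j$, $Q=\prod_{j<q}b_j$, $R=\prod_{j<q}(a_j-b_j)$, so that $P\ge Q\ge 0$, $R\ge 0$, and $P-Q\ge R$ by the induction hypothesis. Writing $a=a_q$, $b=b_q$ with $a\ge b\ge 0$, we decompose
\[
Pa-Qb=(P-Q)a+Q(a-b)\ge Ra+Q(a-b)\ge R(a-b)+0,
\]
since $Ra\ge R(a-b)$ (because $Rb\ge 0$) and $Q(a-b)\ge 0$. This is exactly the required bound on $\prod_{j=1}^{q}a_j-\prod_{j=1}^{q}b_j$.

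\textbf{Main difficulty.} There is really no obstacle here: once one recognises that the statement reduces pointwise to the scalar inequality above, either expansion or a one-line induction closes the argument. The only care needed is to record that $u\ge f\ge 0$ guarantees the nonnegativity of every factor appearing in the expansion, which is what keeps the discarded terms from having the wrong sign.
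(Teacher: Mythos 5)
Your proposal is correct. The paper proves the lemma by exactly the kind of induction you give as your alternative: it adds and subtracts $\widetilde{u-f}(x_1,\dots,x_q)\,[u(x_{q+1})-f(x_{q+1})]$, applies the inductive hypothesis, and regroups the remainder as $f(x_{q+1})\,[\tilde u-\tilde f]+\tilde f\,[u(x_{q+1})-f(x_{q+1})]$, both manifestly nonnegative because $u\ge f\ge 0$. Your decomposition $Pa-Qb=(P-Q)a+Q(a-b)$ followed by $Ra\ge R(a-b)$ is the same argument in slightly different bookkeeping. Your first approach, via the multinomial expansion of $\prod (b_j+c_j)$, is a genuinely different and somewhat cleaner route: it reveals the scalar inequality $\prod a_j-\prod b_j\ge\prod(a_j-b_j)$ as an instance of dropping nonnegative cross-terms, with no induction at all, and makes the reliance on $a_j\ge b_j\ge 0$ transparent. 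Either is a complete proof; the expansion version would be the shorter way to present this in the paper.
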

\begin{proof} 
The proof is by induction on the integer $q\geq 1$.  
The case $q=1$ is trivial, because in such case $\tilde{u}=u,\ \tilde{f}=f,\ \widetilde{u-f}= u-f$.
For the inductive step assume that the result is true until the index $q$, and
denote with $\tilde{\tilde{u}},
\tilde{\tilde{f}},\widetilde{\widetilde{u-f}}$ the respective functions of index $q+1$. By the definition \eqref{fprod}
\[ \pa{\tilde{\tilde{u}} - \tilde{\tilde{f}}}(x_1,...,x_{q+1})= 
\tilde{u}(x_1,...,x_q)u(x_{q+1}) - \tilde{f}(x_1,...,x_q)f(x_{q+1}),\]
\[ \widetilde{\widetilde{u-f}}(x_1,...,x_{q+1}) = \widetilde{u-f}(x_1,...,x_q)\cdot (u-f)(x_{q+1}).\]
These two equalities imply
\[ \pa{\tilde{\tilde{u}} - \tilde{\tilde{f}}}(x_1,...,x_{q+1})=\]
\[=\widetilde{\widetilde{u-f}}(x_1,...,x_{q+1}) - \widetilde{u-f}(x_1,...,x_q) \cdot \qua{u(x_{q+1})-f(x_{q+1})}+\]
\[+ \tilde{u}(x_1,...,x_q)u(x_{q+1}) - \tilde{f}(x_1,...,x_q)f(x_{q+1}) \geq \]
\[\geq  \widetilde{\widetilde{u-f}}(x_1,...,x_{q+1}) - \pa{\tilde{u}-\tilde{f}}(x_1,...,x_q)\qua{u(x_{q+1})-f(x_{q+1})}+\]
\[+ \tilde{u}(x_1,...,x_q)u(x_{q+1}) - \tilde{f}(x_1,...,x_q)f(x_{q+1}) =\]
\[= \widetilde{\widetilde{u-f}}(x_1,...,x_{q+1})+
f(x_{q+1})\qua{\tilde{u}(x_1,...,x_q) - \tilde{f}(x_1,...,x_q)}+\]
\[+\tilde{f}(x_1,...,x_q) \qua{ u(x_{q+1}) - f(x_{q+1})}\geq\]
\[ \geq \widetilde{\widetilde{u-f}}(x_1,...,x_{q+1}),\]
having used the inductive hypothesis and the assumption $u \geq f \geq 0$.
\end{proof}
\vspace{0.2 cm}

\begin{cor}\label{corstab}
Given an integer $n>0$, $\lambda \in (0,1),$
$s= \frac{p}{q}$ with $p,q$ positive integers, 
let  $f,\,g\in L^1(\rn)$ be nonnegative compactly supported functions such that
\[ F=\int_{\R^n} f\ dx>0 \quad \text{and}  \quad G=\int_{\R^n} g\ dx >0 .\]
Let $h: \R^n \longrightarrow [0,+\infty)$ satisfy assumption \eqref{ipotBBL} and
suppose there exists $\epsilon > 0 $ small enough such that
\begin{equation}\label{ipcorstab} 
\pa{\int_{\R^n} h \ dx}^q \leq 
\qua{\mathcal{M}_{\frac{1}{n+s}} \left(F, G\,; \lambda \right)}^q + \epsilon. 
\end{equation}

Then there exist a $\frac{1}{p}$-concave function $u': \R^{nq} \longrightarrow [0,+\infty)$ and two functions 
$\hat{f},\ \hat{g}: \R^{nq} \longrightarrow [0,+\infty)$, 
$p$-equivalent to $\tilde{f}$ and $\tilde{g}$ (given by \eqref{fprod}) in the sense of \eqref{s-equivalent} with 
\[ \mu_{\tilde{f}}= \omega_p^{\frac{-1}{nq+p}}  F^{\frac{-1}{n+s}},  \qquad
 \mu_{\tilde{g}}= \omega_p^{\frac{-1}{nq+p}}  G^{\frac{-1}{n+s}},\]
such that the following hold:
\[u' \geq \hat{f},
\qquad  \qquad  u' \geq \hat{g},\] 
\begin{eqnarray}  \int_{\R^{nq}} (u' -\hat{f}) dx +  \int_{\R^{nq}} \pa{u' -\hat{g}} dx \ \leq 
\  C_{nq+p}\pa{\frac{\epsilon}{\mathcal{M}_{\frac{1}{nq+p}} \left(F^q, G^q\,; \lambda \right)}}.  \label{tesistab2}
\end{eqnarray}
\end{cor}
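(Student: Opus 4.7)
The plan is to reduce the rational case $s = p/q$ (with $p,q$ coprime positive integers) to the integer-index case of Theorem \ref{stab1} by lifting everything to $\R^{nq}$ via the auxiliary product functions $\tilde{f}, \tilde{g}, \tilde{h}$ defined in \eqref{fprod}. The ambient dimension becomes $nq$ and the integer index becomes $p = qs$, chosen precisely so that $\frac{1}{nq + p} = \frac{1}{q(n+s)}$ matches the $q$-th root of $\frac{1}{n+s}$.

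First, by \eqref{intprod} we have $\int \tilde{f}\,dx = F^q$, $\int \tilde{g}\,dx = G^q$ and $\int \tilde{h}\,dx = H^q$ with $H = \int h\,dx$. A direct computation using the identity $q(n+s) = nq + p$ yields
\[
\mathcal{M}_{\frac{1}{n+s}}(F,G;\lambda)^q = \qua{(1-\lambda)F^{\frac{1}{n+s}} + \lambda G^{\frac{1}{n+s}}}^{nq+p} = \mathcal{M}_{\frac{1}{nq+p}}(F^q, G^q; \lambda),
\]
so the hypothesis \eqref{ipcorstab} is exactly $\int \tilde{h}\,dx \leq \mathcal{M}_{\frac{1}{nq+p}}(F^q, G^q;\lambda) + \epsilon$. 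Next, for any $\mathbf{x}=(x_1,\dots,x_q)\in\supp(\tilde{f})$ and $\mathbf{x}'=(x_1',\dots,x_q')\in\supp(\tilde{g})$, applying \eqref{ipotBBL} componentwise and then raising Corollary \ref{corHolder} to the $p$-th power (recall $p = qs$) gives
\[
\tilde{h}((1-\lambda)\mathbf{x}+\lambda\mathbf{x}') = \prod_{j=1}^q h((1-\lambda)x_j+\lambda x_j') \geq \prod_{j=1}^q \qua{(1-\lambda)f(x_j)^{1/s}+\lambda g(x_j')^{1/s}}^s
\]
\[
\geq \qua{(1-\lambda)\tilde{f}(\mathbf{x})^{1/p} + \lambda \tilde{g}(\mathbf{x}')^{1/p}}^p,
\]
so the triple $\tilde{f},\tilde{g},\tilde{h}$ satisfies the BBL hypothesis \eqref{ipotBBL} with integer index $p$ in dimension $nq$.

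Now, provided $\epsilon$ is small enough that the Figalli--Jerison smallness condition is met in the ambient dimension $nq + p$, I would apply Theorem \ref{stab1} with $n$ replaced by $nq$ and the integer $s$ replaced by $p$ to $\tilde{f},\tilde{g},\tilde{h}$. This produces a $\tfrac{1}{p}$-concave function $u':\R^{nq}\to[0,+\infty)$ and $p$-equivalent copies $\hat{f},\hat{g}$ of $\tilde{f},\tilde{g}$ with homothety parameters
\[
\mu_{\tilde{f}} = \pa{\omega_p F^q}^{-\frac{1}{nq+p}} = \omega_p^{-\frac{1}{nq+p}} F^{-\frac{1}{n+s}}, \qquad \mu_{\tilde{g}} = \pa{\omega_p G^q}^{-\frac{1}{nq+p}} = \omega_p^{-\frac{1}{nq+p}} G^{-\frac{1}{n+s}},
\]
where I used $q/(nq+p) = 1/(n+s)$, together with the pointwise inequalities $u'\geq \hat{f}$, $u'\geq \hat{g}$ and the $L^1$-estimate \eqref{tesistab2} with the explicit function $C_{nq+p}$. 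The substantive content lies entirely in the index bookkeeping and the H\"older-type transfer of the BBL hypothesis above; since both the Figalli--Jerison--based stability of Theorem \ref{stab1} and Corollary \ref{corHolder} are in hand, I do not anticipate any genuine obstacle in carrying out this reduction.
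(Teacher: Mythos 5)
Your proof is correct and follows essentially the same route as the paper: lift to $\R^{nq}$ via the product functions $\tilde{f},\tilde{g},\tilde{h}$, use $q(n+s)=nq+p$ to rewrite the hypothesis \eqref{ipcorstab} as a near-equality for the $\frac1{nq+p}$-mean of $F^q,G^q$, transfer the BBL hypothesis to integer index $p$ via Corollary \ref{corHolder}, and then invoke Theorem \ref{stab1} in dimension $nq$ with index $p$. The index bookkeeping $\mu_{\tilde f}=(\omega_p F^q)^{-1/(nq+p)}=\omega_p^{-1/(nq+p)}F^{-1/(n+s)}$ is exactly as in the paper, so the reduction is complete.
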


\begin{proof}
We can assume $h= h_\lambda$.
Since $f$ and $g$ are nonnegative compactly supported functions belonging to $L^1(\rn)$, thus by \eqref{fprod} $\tilde{f},\tilde{g}$ are
nonnegative compactly supported functions belonging to $L^1(\R^{nq})$.
The assumption \eqref{ipcorstab} is equivalent, considering the corresponding functions
$\tilde{f},\tilde{g},\tilde{h}: \R^{nq} \longrightarrow [0,+\infty)$ and using \eqref{intprod}, to
\[\int_{\R^{nq}} \tilde{h}\ dx \leq 
\qua{(1-\lambda) \pa{\int_{\R^{nq}} \tilde{f}  \ dx}^{\frac{1}{nq+qs}} +
\lambda \pa{\int_{\R^{nq}} \tilde{g}\ dx}^{\frac{1}{nq+qs}}}^{nq+qs} + \epsilon \]
\begin{equation} \text{i.e.} \qquad \qquad \int_{\R^{nq}} \tilde{h}\ dx \leq 
{\mathcal{M}_{\frac{1}{nq+p}} \left(F^q, G^q\,; \lambda \right)} + \epsilon. \label{stab2}
\end{equation}
We notice that the index $qs=p$ is integer, while $nq$ is exactly the dimension of the space in 
which $\tilde{f},\tilde{g},\tilde{h}$ are defined.\\
To apply Theorem \ref{stab1}, we have to verify that $\tilde{f},\tilde{g},\tilde{h}$ satisfy
the corresponding inequality \eqref{ipotBBL} of index $qs$. Given $x_1,...,x_q \in\supp(f),\ x'_1,...,x'_q \in\supp(g)$, 
let $x= (x_1,...,x_q)\in \supp(\tilde{f}),\ x'= (x'_1,...,x'_q) \in \supp(\tilde{g})$.
By hypothesis, we know that $f,g,h$ satisfy \eqref{ipotBBL}, 
in particular for every $j=1,...,q$
\[h\pa{(1-\lambda)x_j + \lambda x'_j} \geq \qua{(1-\lambda)f(x_j)^{1/s} + \lambda g(x'_j)^{1/s}}^s.\]
This implies
\[\prod_{j=1}^q h\pa{(1-\lambda)x_j + \lambda x'_j}
\geq  \qua{\prod_{j=1}^q \qua{(1-\lambda)f(x_j)^{1/s} + \lambda g(x'_j)^{1/s}}}^s \geq \]
\begin{eqnarray}
\geq  \qua{(1-\lambda)\pa{\prod_{j=1}^q f(x_j)}^{1/qs} + 
\lambda\pa{\prod_{j=1}^q g(x'_j)}^{1/qs}}^{qs},  \label{hold} 
\end{eqnarray}
where the last inequality is due to Corollary \ref{corHolder}.
By definition of \eqref{fprod}, \eqref{hold} means that for every $x\in \supp(\tilde{f}),\ x'\in \supp(\tilde{g})$ we have
\[ \tilde{h}\pa{(1-\lambda)x+ \lambda x'} \geq 
\qua{(1-\lambda)\tilde{f}(x)^{1/qs} + \lambda\tilde{g}(x')^{1/qs}}^{qs},\]
i.e. the functions  $\tilde{f},\tilde{g},\tilde{h}: \R^{nq} \longrightarrow [0,+\infty)$
satisfy the hypothesis \eqref{ipotBBL} with the required index $qs$.
Therefore we can apply Theorem \ref{stab1} and conclude that there exist
a $\frac{1}{p}$-concave function $u': \R^{nq} \longrightarrow [0,+\infty)$ and 
two functions $\hat{f}, \hat{g},$ 
$p$-equivalent to $\tilde{f}$ and $\tilde{g}$,
with the required properties.
The estimate \eqref{tesistab}, applied to \eqref{stab2}, implies
\[\int_{\R^{nq}} (u' -\hat{f}) \ dx \ + \ \int_{\R^{nq}} \pa{u' -\hat{g} } \ dx 
\ \leq   \  C_{nq+p}\pa{\frac{\epsilon}{\mathcal{M}_{\frac{1}{nq+p}} \left(F^q, G^q\,; \lambda \right)}}.\] 
\end{proof}

\begin{oss}
Assume $F=G$ and, for simplicity,
suppose that $\hat{f}=\tilde{f},\ \hat{g}=\tilde{g}$ in Corollary \ref{corstab}
(as it is true up to a p-equivalence). Moreover
assume that the $\frac{1}{p}$-concave function $u': \R^{nq} \longrightarrow [0,+\infty)$,
given by Corollary \ref{corstab}, is of the type \eqref{fprod}, i.e. $u'= \tilde{u}$
where $u: \R^{n} \longrightarrow [0,+\infty)$ has to be 
$\frac{1}{s}$-concave by Lemma \ref{eredconc}.
In this case Corollary \ref{corstab} assumes a simpler statement, which naturally
extends the result of Theorem \ref{stab1}.
Indeed \eqref{tesistab2}, thanks to Lemma \ref{maggprod}, becomes
\[ \int_{\R^{nq}} \widetilde{u-f} \ dx \ + \ \int_{\R^{nq}} \widetilde{u-g} \ dx 
\ \leq   \  C_{nq+p}\pa{\frac{\epsilon}{\mathcal{M}_{\frac{1}{nq+p}} \left(F^q, G^q\,; \lambda \right)}}, \qquad \text{i.e.}\]
\begin{equation}\label{corsemplice}
\qua{\int_{\R^{n}} \pa{u-f} \ dx}^q \ + \ \qua{\int_{\R^{n}} \pa{u-g} \ dx}^q \ 
\leq   \  C_{nq+p}\pa{\frac{\epsilon}{\mathcal{M}_{\frac{1}{nq+p}} \left(F^q, G^q\,; \lambda \right)}}. 
\end{equation} 


Unfortunately the function $u'$ constructed in Theorem \ref{stab1} is not necessarely of the desired form, that is in general
we can not find a function $u: \R^{n} \longrightarrow [0,+\infty)$ such that $u'= \tilde{u}$ (a counterexample can be 
explicitly given). Then our proof can not be easily extended to the general case $s \in \Q$ to get \eqref{corsemplice}.

\end{oss}
\vspace{0.2 cm}

\section{A stability for $s>0$}
To complete the paper, we give a (weaker) version of our main stability result Theorem \ref{stab1}
which works for an arbitrary real index $s>0$. For this, let us denote by 
$[s]$ the integer part of $s$, i.e. the largest integer not greater than $s$.
Obviously $[s]+1 > s\geq [s]$, whereby 
(by the monotonicity of $p$-means with respect to $p$, i.e.
$\mathcal{M}_p(a,b; \lambda) \leq \mathcal{M}_q(a,b; \lambda)$ if $p \leq q$) 
for every $a,b \geq 0,\ \lambda \in (0,1)$
\begin{eqnarray}
\qua{(1-\lambda)a^{\frac{1}{s}} + \lambda b^{\frac{1}{s}}}^s \geq  
\qua{(1-\lambda)a^{\frac{1}{[s]+1}} + \lambda b^{\frac{1}{[s]+1}}}^{[s]+1}, \label{Ms}
\end{eqnarray}
\begin{eqnarray}
\qua{(1-\lambda)a^{\frac{1}{n+s}} + \lambda b^{\frac{1}{n+s}}}^{n+s} \geq  
\qua{(1-\lambda)a^{\frac{1}{n+[s]+1}} + \lambda b^{\frac{1}{n+[s]+1}}}^{n+[s]+1}. \label{Mn+s}
\end{eqnarray}
We arrive to the following corollary for every index $s>0$.

\begin{cor}\label{stabparteintera}
Given $s>0,\ \lambda \in (0,1),$ let 
$f,g: \R^n \longrightarrow [0,+\infty)$ be integrable functions 
such that
\begin{equation}\label{normalization}
\int_{\R^n} f\ dx=\int_{\R^n} g\ dx =1\,.\end{equation}
Assume $h: \R^n \longrightarrow [0,+\infty)$ satisfies assumption \eqref{ipotBBL2} and
there exists $\epsilon > 0 $ small enough such that
\begin{equation}  
\int_{\R^n} h\ dx \leq 
1 + \epsilon.\label{ipotstabparteintera}
\end{equation}
Then there exist a $\frac{1}{[s]+1}$-concave function $u: \R^n \longrightarrow [0,+\infty)$  and two functions $\hat{f}$ and $\hat{g}$, $([s]+1)$-equivalent to $f$ and $g$ in the sense of \eqref{isometrie}
(with $\mu_f= \mu_g=\pa{\omega_{[s]+1} }^{\frac{-1}{n+[s]+1}}$) 
such that
\[ u \geq \hat{f}, \qquad  \qquad u \geq \hat{g},\] and
\[\int_{\R^n} (u-\hat{f}) \ dx \ + \ \int_{\R^n} \pa{u-\hat{g}} \ dx  \  \leq \ C_{n+[s]+1}(\epsilon).\]
\end{cor}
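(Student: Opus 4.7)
The proof plan is to reduce the general real exponent $s>0$ to the integer exponent $[s]+1$ and then directly invoke Theorem \ref{stab1}. The key observation is that the monotonicity inequalities \eqref{Ms} and \eqref{Mn+s} point in directions that are compatible both with the BBL hypothesis on $h$ and with the integral bound \eqref{ipotstabparteintera}, while the normalization \eqref{normalization} makes the $p$-mean $\mathcal{M}_{1/(n+s)}(F,G;\lambda)$ independent of $s$.

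First, I would verify that the triple $f,g,h$ satisfies the hypothesis of Theorem \ref{stab1} with $s$ replaced by the positive integer $[s]+1$. From \eqref{ipotBBL2} together with the pointwise inequality \eqref{Ms} applied with $a=f(x_0)$, $b=g(x_1)$, one gets
\[
h((1-\lambda)x_0+\lambda x_1)\ \geq\ \bigl[(1-\lambda)f(x_0)^{1/([s]+1)}+\lambda g(x_1)^{1/([s]+1)}\bigr]^{[s]+1}
\]
for all $x_0\in\supp(f)$, $x_1\in\supp(g)$, which is exactly \eqref{assumptionh} with exponent $[s]+1$.

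Next, I would rewrite the smallness hypothesis \eqref{ipotstabparteintera} in the form required by Theorem \ref{stab1} for the integer exponent $[s]+1$. Since $F=G=1$ by the normalization \eqref{normalization}, we trivially have
\[
\mathcal{M}_{\tfrac{1}{n+[s]+1}}(F,G;\lambda)\ =\ 1\ =\ \mathcal{M}_{\tfrac{1}{n+s}}(F,G;\lambda),
\]
so \eqref{ipotstabparteintera} is the same as
\[
\int_{\R^n} h\ dx\ \leq\ \mathcal{M}_{\tfrac{1}{n+[s]+1}}(F,G;\lambda)+\epsilon.
\]
(Note that \eqref{Mn+s} is not even needed here; it would be the relevant inequality if one wanted to drop the normalization assumption, but under $F=G=1$ the two means coincide.)

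Finally, I would apply Theorem \ref{stab1} with the integer exponent $[s]+1$ in place of $s$ directly to $f,g,h$. This yields a $\tfrac{1}{[s]+1}$-concave function $u:\R^n\to[0,+\infty)$ and two functions $\hat f,\hat g$ which are $([s]+1)$-equivalent to $f$ and $g$ in the sense of \eqref{s-equivalent}. The explicit values of the dilation factors from \eqref{mufmug}, evaluated with $s$ replaced by $[s]+1$ and $F=G=1$, reduce to
\[
\mu_f\ =\ \mu_g\ =\ \bigl(\omega_{[s]+1}\bigr)^{-\frac{1}{n+[s]+1}},
\]
as stated. The estimates $u\geq \hat f$, $u\geq \hat g$ and the $L^1$ closeness
\[
\int_{\R^n}(u-\hat f)\,dx+\int_{\R^n}(u-\hat g)\,dx\ \leq\ C_{n+[s]+1}(\epsilon)
\]
follow from \eqref{tesistab0} and \eqref{tesistab} after observing that the argument of $C_{n+[s]+1}$ simplifies to $\epsilon$ by the same normalization. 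There is essentially no obstacle: the only nontrivial point is to notice that the two monotonicities in $p$ are oriented correctly so that strengthening the BBL hypothesis (i.e. demanding it with the larger integer exponent $[s]+1$) is a \emph{consequence} of the weaker hypothesis with exponent $s$, rather than the other way around.
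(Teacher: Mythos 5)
Your proof is correct and follows essentially the same route as the paper: you use \eqref{Ms} to upgrade the BBL hypothesis from exponent $s$ to the integer exponent $[s]+1$, note that the normalization $F=G=1$ makes all the relevant $p$-means equal to $1$, and then apply Theorem \ref{stab1} directly with $[s]+1$ in place of $s$. Your side remark that \eqref{Mn+s} is not actually needed under the normalization is accurate and consistent with the paper's subsequent remark discussing what happens when the normalization is dropped.
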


\vspace{0.3 cm}
\begin{proof}
We notice that the assumption \eqref{ipotBBL2} (i.e. the hypothesis of BBL of index $\frac{1}{s}$),
through \eqref{Ms}, implies that for every $x_0 \in\supp(f),\ x_1 \in\supp(g)$ 
\[h\pa{(1-\lambda)x_0 + \lambda x_1} \geq \qua{(1-\lambda)f(x_0)^{\frac{1}{[s]+1}}
 + \lambda g(x_1)^{\frac{1}{[s]+1}}}^{[s]+1},\]
i.e. the corresponding hypothesis of BBL for the index $\frac{1}{[s]+1}$.
Therefore, thanks to the assumptions \eqref{normalization} and \eqref{ipotstabparteintera}, it holds
$ \int h  \leq  1 + \epsilon = \mathcal{M}_{\frac{1}{n+[s]+1}}(\int f,\int g;\lambda)+\epsilon, $
so we can apply directly Theorem \ref{stab1} using the integer $[s]+1$ as index.
This concludes the proof.
\end{proof}
\vspace{0.3 cm}
\begin{oss}
If we don't use the normalization \eqref{normalization} and want to write a result for generic unrelated $F=\int f$ and $G=\int g$, we can notice that
assumption \eqref{ipotstabparteintera} should be replaced by
$$
\int_{\R^n} h\ dx \leq \mathcal{M}_{\frac{1}{n+[s]+1}}(F,G;\lambda)+\epsilon\,.
$$
On the other hand, thanks to assumption \eqref{ipotBBL2},
we can apply Proposition \ref{BBL2} and obtain
\[ \int_{\R^n} h\ dx \geq  
\mathcal{M}_{\frac{1}{n+s}}(F,G;\lambda).\]
Then we would have
$$
\mathcal{M}_{\frac{1}{n+s}}(F,G;\lambda)\leq \mathcal{M}_{\frac{1}{n+[s]+1}}(F,G;\lambda)+\epsilon\,.
 $$
The latter inequality is possible only if $F$ and $G$ are close to each others, thanks to the stability of the monotonicity property of $p$-means, which states
$$
\mathcal{M}_{\frac{1}{n+[s]+1}}(F,G;\lambda)\leq\mathcal{M}_{\frac{1}{n+s}}(F,G;\lambda),
$$
with equality if and only if $F=G$. 
In this sense the normalization \eqref{normalization} cannot be completely avoided and the result obtained in Corollary \ref{stabparteintera} is weaker than what desired. Indeed notice in particular that it does not coincide with Theorem \ref{stab1} even in the case 
when $s$ is integer, since $[s]+1>s$ in that case as well.
\end{oss}
\vspace{0.3 cm}

\section{Appendix}
Here we show that the $S$-symmetrization, introduced in Remark \ref{rem1}, preserves the convexity of the involved set
(that is the property (iii) therein).\\
We use the notations of Remark \ref{rem1}, in particular we refer to \eqref{maxraggio} and \eqref{simmesferica}, 
and remember that $C$ is a bounded measurable set in $\R^{n+s}$. We need the following preliminary result,
based on the Brunn-Minkowski inequality in $\R^s$.



\begin{lemma}\label{disraggi}
If $C$ is convex, then for every 
$t\in (0,1)$ and every $x_0,x_1\in \R^n$ 
such that 
$C(x_0), C(x_1)$ are nonempty sets, it holds
\begin{eqnarray}  
(1-t)r_C(x_0) + t r_C(x_1)\ \leq\ r_C((1-t)x_0 + t x_1).  \label{raggi}
\end{eqnarray}
\end{lemma}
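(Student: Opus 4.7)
The plan is to show that $r_C$, as a function on the set of $\bar x \in \R^n$ with $C(\bar x) \neq \emptyset$, is concave, by exhibiting an inclusion between slices and then invoking the Brunn-Minkowski inequality in $\R^s$.

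First I would fix $t \in (0,1)$ and $x_0, x_1 \in \R^n$ with $C(x_0), C(x_1)$ nonempty, and show the set-theoretic inclusion
\[
(1-t)C(x_0) + t\,C(x_1) \ \subseteq\ C\bigl((1-t)x_0 + t x_1\bigr).
\]
This step is immediate from the convexity of $C$ in $\R^{n+s}$: if $y_0 \in C(x_0)$ and $y_1 \in C(x_1)$, then $(x_0,y_0), (x_1,y_1) \in C$, and the convex combination
\[
(1-t)(x_0,y_0) + t(x_1,y_1) \ =\ \bigl((1-t)x_0 + tx_1,\; (1-t)y_0 + ty_1\bigr)
\]
lies in $C$, which means $(1-t)y_0 + ty_1$ belongs to the slice at $(1-t)x_0 + tx_1$.

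Next I would apply Proposition \ref{BM} in $\R^s$ to the (nonempty) sets $C(x_0)$ and $C(x_1)$, taking outer measure if necessary, to obtain
\[
\bigl|(1-t)C(x_0) + t\,C(x_1)\bigr|^{1/s} \ \geq\ (1-t)|C(x_0)|^{1/s} + t\,|C(x_1)|^{1/s}.
\]
Combining with the inclusion above (monotonicity of measure) and dividing by $\omega_s^{1/s}$ yields exactly \eqref{raggi} via the definition \eqref{maxraggio} of $r_C$.

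I do not foresee a real obstacle: the one subtle point is that the Minkowski combination on the left may fail to be measurable, but as already noted in Proposition \ref{BM} the inequality still holds with the outer measure, and only the upper bound by $|C((1-t)x_0+tx_1)|$ is needed, which follows from the inclusion together with the fact that $C$ itself is measurable (hence so are its sections almost everywhere; in any case replacing $|\cdot|$ by outer measure on the left of the chain preserves the estimate). With \eqref{raggi} in hand, convexity of $S(C)$ follows easily: given $(x_0, y_0), (x_1, y_1) \in S(C)$, one has $|y_i| \leq r_C(x_i)$, so by \eqref{raggi} and the triangle inequality $|(1-t)y_0 + t y_1| \leq (1-t)r_C(x_0) + t r_C(x_1) \leq r_C((1-t)x_0 + tx_1)$, placing the convex combination in $S(C)$.
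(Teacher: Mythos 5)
Your proposal is correct and follows essentially the same route as the paper: establish the slice inclusion $(1-t)C(x_0)+tC(x_1)\subseteq C((1-t)x_0+tx_1)$ from convexity of $C$, apply the Brunn-Minkowski inequality in $\R^s$ to $C(x_0)$ and $C(x_1)$, and conclude via monotonicity of measure and the definition \eqref{maxraggio}. The measurability hedge at the end is unnecessary here since $C$ is convex, so the slices $C(x_0),C(x_1)$ are themselves convex (hence measurable) sets in $\R^s$, as the paper notes.
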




\begin{proof}
By defintion of \eqref{maxraggio} \[r_C(x_0)= {\omega_s}^{-1/s} |C(x_0)|^{1/s},\quad r_C(x_1)= {\omega_s}^{-1/s} |C(x_1)|^{1/s},\] thus 
\begin{eqnarray}  
(1-t)r_C(x_0) + t r_C(x_1)=  \omega_s^{-1/s} \qua{(1-t) |C(x_0)|^{1/s} + t |C(x_1)|^{1/s}}. \label{concraggi}
\end{eqnarray}
Since $C$ is convex, we notice that $C(x_0), C(x_1)$ are (nonempty) convex sets in $\R^s$ such that
\begin{equation}\label{disugC}
(1-t) C(x_0) + t C(x_1) \ \subseteq\ C((1-t)x_0 + t x_1).
\end{equation}
Applying BM inequality (i.e. Proposition \ref{BM}) to the sets $C(x_0), C(x_1) \subset \R^s$, \eqref{concraggi} implies
\[ (1-t)r_C(x_0) + t r_C(x_1) \ \leq\ \omega_s^{-1/s} \abs{(1-t) C(x_0) + t C(x_1)}^{1/s} \leq \]
\[ \leq \omega_s^{-1/s} \abs{C((1-t)x_0 + t x_1)}^{1/s} = r_C((1-t)x_0 + t x_1), \]
where in the last inequality we use \eqref{disugC}.
\end{proof}

\begin{prop}\label{simmconv}
If $C$ is convex then $S(C)$ is convex.
 \end{prop}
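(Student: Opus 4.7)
The plan is to prove convexity of $S(C)$ directly from the definition, using Lemma \ref{disraggi} as the essential ingredient (which in turn is where the Brunn--Minkowski inequality in $\R^s$ enters).

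First I would fix two arbitrary points $(\bar x_0, y_0), (\bar x_1, y_1) \in S(C)$ and $t \in (0,1)$, and aim to show that the convex combination
$$\bigl((1-t)\bar x_0 + t\bar x_1,\; (1-t)y_0 + t y_1\bigr)$$
belongs to $S(C)$. By the definition \eqref{simmesferica}, membership in $S(C)$ requires two things: that the slice $C((1-t)\bar x_0 + t\bar x_1)$ be nonempty, and that the Euclidean norm of the $y$-component be at most $r_C((1-t)\bar x_0 + t\bar x_1)$.

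For the nonemptiness, I would invoke the inclusion \eqref{disugC}, namely $(1-t)C(\bar x_0) + tC(\bar x_1) \subseteq C((1-t)\bar x_0 + t\bar x_1)$, which is immediate from the convexity of $C$; since $C(\bar x_0)$ and $C(\bar x_1)$ are both nonempty by hypothesis, so is the right-hand side. For the size estimate, the chain is simply
$$\bigl|(1-t)y_0 + t y_1\bigr| \;\leq\; (1-t)|y_0| + t|y_1| \;\leq\; (1-t)r_C(\bar x_0) + t\,r_C(\bar x_1) \;\leq\; r_C\bigl((1-t)\bar x_0 + t\bar x_1\bigr),$$
where the first step is the triangle inequality in $\R^s$, the second uses the membership $(\bar x_i, y_i) \in S(C)$, and the third is exactly Lemma \ref{disraggi}.

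There is no real obstacle here, since all the heavy lifting (the $s$-dimensional Brunn--Minkowski inequality) has already been absorbed into Lemma \ref{disraggi}. The only small point worth noting is that Lemma \ref{disraggi} requires both slices $C(\bar x_0)$ and $C(\bar x_1)$ to be nonempty, which is automatic from $(\bar x_i, y_i) \in S(C)$, so the hypothesis applies verbatim and the proof closes in one paragraph.
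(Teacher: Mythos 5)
Your proposal is correct and follows essentially the same argument as the paper's Appendix proof: fix two points of $S(C)$, use the inclusion \eqref{disugC} to get nonemptiness of the intermediate slice, and then chain the triangle inequality with Lemma \ref{disraggi} to bound the $y$-component. No meaningful difference in structure or ingredients.
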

\begin{proof}
Let $t \in (0,1)$, and let $P=(x_0,y_0),Q=(x_1,y_1)$ be two distinct points belonging to $S(C)$, i.e. $C(x_0), C(x_1)$ are nonempty sets 
and
\begin{eqnarray}
\abs{y_0}\leq r_C(x_0), \qquad  \abs{y_1}\leq r_C(x_1). \label{lemma1}
\end{eqnarray}
We prove that \[(1-t)P+tQ = \pa{(1-t)x_0+tx_1, (1-t)y_0+ty_1} \in S(C).\]
By assumptions and \eqref{disugC} the set $C((1-t)x_0 + t x_1)$ is nonempty. Furthermore by the triangle inequality,
 \eqref{lemma1} and Lemma \ref{disraggi} we obtain
\[ \abs{(1-t)y_0 + ty_1} \leq (1-t)\abs{y_0} + t \abs{y_1} \leq (1-t)r_C(x_0) + t r_C(x_1) \leq r_C((1-t)x_0 + t x_1).\]
Then $(1-t)P+tQ\in S(C)$, i.e. $S(C)$ is convex.
\end{proof}


\vspace{0.3 cm}

\pagestyle{plain}
\addcontentsline{toc}{section}{Bibliografia}

\end{document}